\renewcommand{\b}{\boldsymbol}
\newtheorem{myRem}{Remark}
\newtheorem{proposition}{Proposition}
\newtheorem{theorem}{Theorem}
\begin{document}

\begin{frontmatter}



\title{A Moving Mesh Method for Porous Medium Equation by the Onsager Variational Principle}


\author[1]{Si Xiao }
\ead{xiaosi@lsec.cc.ac.cn}
\author[1]{Xianmin Xu \corref{cor1}}
\ead{xmxu@lsec.cc.ac.cn}

\cortext[cor1]{Corresponding author}

\cortext[fund]{Funding: The work was partially supported by NSFC 11971469 and 12371415.}

\affiliation[1]{organization={LSEC, ICMSEC, NCMIS, Academy of Mathematics and Systems Science, Chinese Academy of Sicences},
            city={Beijing},
            postcode={100190}, 
            country={China}}

\begin{abstract}

In this paper, we introduce a new approach to solving the porous medium equation using a moving mesh finite element method that leverages the Onsager variational principle as an approximation tool. Both the continuous and discrete problems are formulated based on the Onsager principle. The energy dissipation structure is maintained in the semi-discrete and fully implicit discrete schemes.  We also develop a fully decoupled explicit scheme by which only a few linear equations are solved sequentially in each time step. The numerical schemes exhibit an optimal convergence rate when the initial mesh is appropriately selected to ensure accurate approximation of the initial data. Furthermore, the method naturally captures the waiting time phenomena without requiring any manual intervention.
\end{abstract}

%

\begin{keyword}
	
	Porous medium equation \sep Onsager variational principle \sep moving mesh method



\end{keyword}

\end{frontmatter}


\section{Introduction}
The porous medium equation (PME) serves as a prominent mathematical model frequently utilized to comprehensively describe various physical and biological phenomena, including gas flow, nonlinear heat transport, groundwater movement, etc. The PME is a nonlinear partial differential equation taking the form:
\begin{equation}
\label{e:PME} \partial_t \rho = \Delta \rho^m, \quad m>1. 
\end{equation} 
Extensive research has revealed numerous intriguing properties of this equation. One particularly fascinating characteristic is its association with a finite speed of propagation. This stands in stark contrast to the linear heat equation ($m=1$), where heat propagation speed is infinite. Notably, if the initial value of 
$\rho$ possesses compact support, the boundary of this support moves at a finite velocity even when the equation is defined across the entire space, leading to a typical free boundary problem. Furthermore, under specific initial conditions, the solution of the PME may demonstrate a waiting time phenomenon \cite{knerr1977porous}, wherein the free boundary remains stationary until a critical time threshold is surpassed. These properties have been extensively investigated in mathematics (e.g in  \cite{oleinik1958cauchy,kalavsnikov1967formation,aronson1983initially,herrero1987one,aronson2006porous,vazquez2007porous}). 


From a numerical perspective, solving the PME presents several challenges. Firstly, the free boundary of the solution is not easy to captured by standard numerical methods. The moving velocity of the boundary depends on the derivative of the solution.To ensure precise velocity calculations, it is imperative to compute the gradient velocity with precision. Secondly, singularities in the solution of the PME arise at the free boundary. For larger $m$, the regularity is worse.  Thirdly, it is also a  challenge to accurately compute the waiting time.


These challenges have sparked significant interest in the PME, leading to the development of numerous numerical methods in literature. In early investigations, researchers commonly transformed the PME into an alternative equation based on pressure, which exhibits better regularity. The free boundary was typically addressed using either a front tracing method or by extending the equation to a larger fixed domain to avoid boundary issues. The pressure equation was often solved using a  finite element method \cite{nochetto1988approximation} or a finite difference method \cite{graveleau1971finite, tomoeda1983numerical, dibenedetto1984interface, monsaingeon2016explicit}, yielding a convergence rate typically of first order with respect to the mesh size.
Later on, the original PME was solved by using a high order local discontinuous Galerkin method to improve the accuracy. A suitable non-negativity preserving limiter was used to prevent oscillations near the free interface \cite{zhang2009numerical}. More recently, new numerical schemes have emerged for the PME, focusing on preserving the positivity of density and the energy dissipation relation \cite{gu2020bound, vijaywargiya2023two}.

 
 Given the singularity and moving boundary present in the solution of the PME, a natural approach is to employ  moving mesh  methods \cite{huang1994moving,li2001moving,tang2005moving,budd2009adaptivity,hu2011simulating} to address these complexities. Baines et al. introduced a moving mesh finite element method, where the mesh dynamically adjusts based on a local scale-invariant conservation principle \cite{baines2005moving, baines2005moving2, baines2011velocity}. Numerical experiments demonstrate a second-order convergence rate for this method when the mesh is appropriately selected initially. 
 Duque et al. utilized a moving mesh partial differential equation (MMPDE) method to tackle the PME with a variable component  $m$ \cite{duque2015application, duque2014numerical}. Ngo \& Huang \cite{ngo2017study, ngo2019adaptive} also applied the MMPDE approach to solve both the PME and the transformed pressure equation. Their detailed study on the impact of the metric tensor in a novel implementation of the velocity equation revealed that Hessian-based adaptive meshes yield optimal second-order convergence. 
 Theoretical analysis for the moving mesh methods is usually difficult due to a lack of a variational formulation.


The previously mentioned methods are  based on an Eulerian framework, where the computational domain is in  the physical domain. Recently, there has been a surge in the development of Lagrangian-type methods, where the problem is formulated within a reference domain. Liu and his collaborators have introduced several schemes based on the energy variational method \cite{duan2019numerical, liu2020lagrangian, duan2022second}. Their research highlighted that specific selections of the energy and dissipation functionals can significantly enhance the efficiency of the numerical method. Furthermore, optimal error estimates have been established  under certain regularity assumptions \cite{duan2022second}. Additionally, Carrillo et al. have devised a Lagrange-type scheme \cite{carrillo2018lagrangian} and a particle method \cite{carrillo2015numerical} utilizing the theory of Wasserstein gradient flow. One  challenge with Lagrangian-type methods is to solve some highly nonlinear problems at each time step. Proper iterative solver has to be  chosen to solve the problem efficiently.


In this paper, our objective is to devise a numerical method for the PME that combines the strengths of both the conventional moving mesh method (e.g. in an Eulerian framework) and Lagrangian-type methods (e.g. based on a variational formulation). To achieve this goal, we will leverage the Onsager principle as an approximation tool. The Onsager variational principle is a fundamental law for characterizing irreversible processes in nonequilibrium thermodynamics \cite{onsager1931reciprocal, onsager1931reciprocal2}. This principle has been instrumental in deriving mathematical models for various soft matter physics problems \cite{DoiSoft}. Recent studies have demonstrated the effectiveness of the Onsager principle as a powerful approximation tool for deriving reduced models (c.f. \cite{Doi2015, XuXianmin2016, ManXingkun2016, doi2019application, zhang2022effective}, among others). Moreover, the Onsager principle has been utilized in designing numerical schemes \cite{lu2021efficient, xu2023variational}. It has also been shown that the moving finite element method can be naturally derived from this principle \cite{xu2023variational}. 


Motivated by the previous studies, we derive a novel numerical method for the PME in this paper. We first demonstrate the natural derivation of the PME from the Onsager principle. We treat the continuum equation
$\partial_t  \rho + \nabla\cdot (\rho {v})=0$ as a constraint and incorporate a Lagrange multiplier into the Reyleignian functional to obtain a mixed version of the PME. Subsequently, we approximate the unknown function $\rho$ and the multiplier by finite element functions on a triangulation with movable nodes. These nodes are assumed to move at the same velocity as dictated by the continuum equation. By reapplying the Onsager principle, we formulate a semi-discrete numerical scheme in mixed form, demonstrating that the scheme upholds the same energy dissipation relation as the continuous problem. When employing an explicit time derivative discretization, a decoupled numerical scheme emerges, where a few linear equations are sequentially solved at each time step. We also develop an implicit scheme which leads to the establishment of the energy dissipation relation for the fully discrete method. We also explore the impact of the mass conservation property by varying the boundary condition for the Lagrangian multiplier. Numerical examples show that  our methods exhibit optimal convergence rates and accurately capture the waiting time phenomenon in both one-dimensional and two-dimensional scenarios.

The rest of the paper is organized as follows. In Section 2, we present the derivation of the PME by using the Onsager variational principle. We then apply the Onsager principle in the finite element space to derive two numerical schemes in Section 3. 
In Section 4, we discuss briefly an alternative semi-discrete scheme with improved mass conservation property. Numerical examples are given in Section 5 to illustrate the efficiency of our methods. A few concluding remarks are given in the last section.

\section{Derivation of the PME by the Onsager principle}
In this section, we will derive a model for gas flow in a homogeneous porous medium by the Onsager variational principle\cite{onsager1931reciprocal2,DoiSoft}. For simplicity, we consider the dimensionless model throughout the paper. We denote by $\rho(x,t)$ the mass density of a gas in a porous medium. 
Suppose the free energy is given by \cite{otto2001geometry}
\begin{equation}
	\mathcal{E}(\rho) = \int_{\Omega(t)} f(\rho(x,t)) dx,
\end{equation}
where $\Omega(t) \subset \mathbb{R}^d$ is the domain where the gas flow occupies at time $t$,  and $f(\rho) = \frac{1}{m-1}\rho^m (m>1)$ is the free energy density. From thermodynamic theory, the gas pressure is $p = \delta \mathcal{E} / \delta \rho = f'(\rho)$. 
Let the vector field $v(x,t):\Omega\times[0,T]\mapsto \mathbb{R}^n$ be the average velocity of the gas. Denote by $J = \rho v$ the mass flux in the system. 
Then we have the mass conservation equation, 
\begin{equation}
	\partial_t \rho + \nabla \cdot J = 0,
	\label{eq:continuity eq}
\end{equation}
In this setting, the mass flux through the boundary $\partial \Omega$ is zero, i.e., $J\cdot n|_{\partial \Omega} = 0$, where $n$ is the outward unit normal on the boundary.

To derive the PME by the Onsager principle, we first calculate the changing rate  of the free energy. By  Reynold's transport theorem, we obtain 
\begin{equation}
	\dot{\mathcal{E}} = \frac{d}{dt}\int_{\Omega(t)} f(\rho) dx = \int_{\Omega(t)} f'(\rho)\partial_t \rho dx + \int_{\partial \Omega(t)} f(\rho) v\cdot n ds = \int_{\Omega(t)} f'(\rho)\partial_t \rho dx,
\end{equation}
where the last equality uses the zero mass flux  condition on the boundary. Then we define the dissipation function as 
\begin{equation}
	\Phi(\rho;v) = \int_{\Omega(t)} \frac{1}{2}\rho |v|^2 dx.
\end{equation} 
The Rayleighian functional is given by
\begin{equation}
	\mathcal{R} = \Phi + \dot{\mathcal{E}}.
\end{equation}

There are two ways to derive the PME by the Onsager variational principle. In the first approach, we derive the PME by minimizing the Rayleighian functional with respect to the mass flux $J$.
Direct calculate give
\begin{equation}
	\begin{split}
		\mathcal{R} & = \int_{\Omega(t)}\frac{|J|^2}{2\rho} dx  +  \int_{\Omega(t)} f'(\rho)\partial_t \rho dx = \int_{\Omega(t)}\frac{|J|^2}{2\rho} dx  -  \int_{\Omega(t)} f'(\rho) \nabla \cdot J dx \\
		& = \int_{\Omega(t)}\frac{|J|^2}{2\rho} dx + \int_{\Omega(t)} \nabla f'(\rho) J dx,
	\end{split}
\end{equation}
where the second equation utilizes the continuity equation \eqref{eq:continuity eq} and the last equation utilizes the integration by parts. We minimize $\mathcal{R}$ with respect to the flux $J$, i.e.
\begin{equation*}
	\min_J \mathcal{R}(J).
\end{equation*}
The corresponding Euler-Langrange equation is
\begin{equation}
	J = - \rho \nabla f'(\rho) = -\nabla \rho^m.
\end{equation}
Substituting the equation into \eqref{eq:continuity eq}, we have the PME
\begin{equation}	\label{PME}
	\partial_t \rho = \Delta \rho^m.
\end{equation}

Although the above derivation is straightforward, we will present a different approach below, which is more helpful  to propose a numerical method. We will derive the PME by minimizing the Rayleighian functional with respect to $\partial_t \rho$ and $v$ under the constraint of 
the mass conservation equation. That is to consider the problem
\begin{equation}\label{e:Onsager}
	\begin{aligned}
		&\min _{\partial_t \rho, v}\mathcal{R}(\rho;\partial_t \rho, v) = \Phi(\rho;v) + \dot{\mathcal{E}}(\rho ; \partial_t \rho) = \frac{1}{2}\int_{\Omega(t)} \rho |v|^2 dx + \int_{\Omega(t)} f'(\rho)\partial_t \rho dx, \\
		&s.t. \quad \partial_t \rho + \nabla \cdot (\rho v) = 0.
	\end{aligned}
\end{equation}
By introducing a Lagrange multiplier $\lambda(x)$, we obtain a Lagrangian functional
\begin{equation}
	\tilde{\mathcal{R}}= \frac{1}{2}\int_{\Omega(t)} \rho |v|^2 dx + \int_{\Omega(t)} f'(\rho)\partial_t \rho dx - \int_{\Omega(t)} \lambda(x) (\partial_t \rho + \nabla \cdot (\rho v)) dx.
\end{equation}
The corresponding Euler-Lagrange equation is
\begin{equation}
	\begin{cases}
		f'(\rho) - \lambda(x) = 0 ,\\
		 v +  \nabla \lambda = 0,\\
		\partial_t \rho + \nabla \cdot (\rho v) = 0.
	\end{cases}
	\label{eq:EL}
\end{equation} 
In the above equation,  the Lagrange multiplier $\lambda(x)$ has a physical interpretation
that $\lambda = f'(\rho)=\frac{m}{m-1}\rho^{m-1}$ is the pressure. Since $m>1$, we have $\lambda=0$ on $\partial\Omega$.
The property is important for us to derive a decoupled scheme in the next section.
 Note that $v = -\nabla \lambda $ is the Darcy's law. 
By the above equation, we can easily derive the PME \eqref{PME}.
The equation \eqref{eq:EL} can be seen as a mixed form of  the equation \eqref{PME}.

It is easy to see that the solution $\rho$ of the equation \eqref{eq:EL} satisfies the following energy decay property
\begin{equation}
	\frac{d\mathcal{E}}{dt} \leq 0.
\end{equation}
Actually, by using the property $\rho \geq 0$, we have
\begin{equation}
	\begin{split}
		\frac{d\mathcal{E}}{dt} &= \int_{\Omega(t)} f'(\rho) \partial_t \rho dx = \int_{\Omega(t)} \nabla f'(\rho) \cdot \rho v dx = \int_{\Omega(t)} \nabla \lambda \cdot \rho v dx\\
		& = -\int_{\Omega(t)} \rho|v|^2  dx = -2\Phi(\rho;v) \leq 0.
	\end{split} 
\end{equation}

In the previous derivation, we  see that $v=-\nabla\lambda$ may not be equal to zero on $\partial\Omega$.
This implies that the PME is a free boundary problem. We rewrite the equation in a closed form that
\begin{equation}\label{e:PMEfull}
	\begin{cases}
		\partial_t \rho  = \Delta \rho^m, &\text{in } \Omega(t), \\
		\rho = 0, & \text{on } \partial \Omega(t), \\
		v_n = -\frac{m}{m-1}\nabla \rho^{m-1} \cdot n, &\text{on } \partial \Omega(t),\\
				\rho(x,0) = \rho_0(x) , &  \text{at } t = 0.
	\end{cases}
\end{equation}
where $v_n=v\cdot n$ is the outer normal velocity of the free boundary.
The well-posedness of the equation can be found in \cite{vazquez2007porous}. In addition to the mass conservation and energy decay properties, the PME has some other interesting properties, like the waiting time phenomenon and the finite diffusion velocity, etc. 
In next section, we will derive a numerical method to the PME by using the Onsager principle as an approximation tool.

\section{A moving mesh finite element method}
For simplicity in presentation, we derive a moving mesh finite element method by using the Onsager variational principle in one dimension in this section. The derivation can be generalized to higher dimensional cases straightforwardly (c.f. the two dimensional cases in the appendix). Let the interval $I(t) = [a(t),b(t)]$ be the domain where the PME is defined.  The equation~\eqref{e:PMEfull} is reduced to
\begin{equation}
	\begin{cases}
		\partial_t \rho(x,t) = \partial_{xx} \rho^m(x,t), &  x\in I(t), t>0,\\
		\rho(a(t),t) = 0, \quad \rho(b(t),t) = 0,& t>0,\\
				\dot{a} = -\frac{m}{m-1} \partial_x \rho^{m-1}(a) ,\quad \dot{b} = -\frac{m}{m-1} \partial_x \rho^{m-1}(b), & t>0,\\
		\rho(x,0) = \rho^0(x),  & x\in I(0). \\
	\end{cases}
	\label{eq:1D PME}
\end{equation}
We will not directly discretize the problem \eqref{eq:1D PME}. Instead we derive a discrete problem by the Onsager variational principle.

\subsection{Semi-discretization}
We first partition the interval $I(t)$ by $N+1$  knots,
\begin{equation}
	X(t):=\{a(t) = x_0(t) < x_1(t) <\cdots < x_N(t) = b(t)\}.
\end{equation}
Notice that the knots may change positions with respect to time. Denote the partition as $\mathcal{T}_h: = \{ I_i \}_{i=1}^N$, where $I_i = (x_{i-1}(t),x_i(t)]$. Then we can define the finite element space $V_h^t$
\begin{equation}
	V_h^t : = \{ u_h \in C(I(t)):u_h \text{ is linear in } I_i(t),\forall i = 1,...,N \}.
\end{equation}
Denote by $V_{h,0}^t =\{u_h \in V_h^t: u_h(a) =u_h(b) =0 \}$. For any function $\rho_h(x,t) \in V_{h,0}^t$, it can be written as 
\begin{equation}
	\rho_h(x,t) = \sum_{i=1}^{N-1} \rho_i(t) \phi_i(x,t),
\end{equation}
where $\phi_i(x,t)$ is the finite element basis function associated with $x_i$, i.e.
\begin{equation}
	\phi_i(x,t) = \phi_i^l + \phi_i^r = \frac{x-x_{i-1}(t)}{x_i(t)-x_{i-1}(t)} \chi _{I_i}(x) + \frac{x_{i+1}(t) - x}{x_{i+1} -x_i(t)}\chi_{I_{i+1}}(x),
\end{equation}
where $\chi_{I_i}$ is the characteristic function corresponding to $I_i$.
Due to the Dirichlet boundary condition $\rho_0(t) = \rho_N(t) = 0$, there are $2N$ time dependent parameters in the formula of $\rho_h(t,x)$, i.e.,
\begin{equation}
	\{ \rho_1(t), \rho_2(t),\cdots,\rho_{N-1}(t),x_0(t),x_1(t),\cdots,x_N(t)\}.
	\label{test}
\end{equation}
We aim to approximate the solution $\rho$ of the  problem \eqref{eq:1D PME} by a discrete function $\rho_h(t,x)$. For that propose, we will derive a dynamic equation for $\rho_i(t)$ and $x_i(t)$ by using the Onsager principle.

Firstly, we discretize the  energy functional and as follows. Notice that the time derivative and space derivative of $\rho_h(t,x)$ are
respectively given by
\begin{align*}
	\partial_t \rho_h  &= \sum_{i=1}^{N-1}\dot{\rho}_i(t) \phi_i(x,t) + \sum_{i=0}^{N}\dot{x}_i(t) \psi_i(x,t),\\
	\partial_x \rho_h &= \sum_{i=1}^{N-1}\rho_i(t) \partial_x \phi_i(x,t),
\end{align*}
where 
\begin{equation*}
	\psi_i(x,t) = \frac{\partial \rho_h}{\partial x_i} = -D_h\rho_{i-1}\phi_i^l - D_h \rho_{i} \phi_i^r,
\end{equation*} 
with $D_h \rho_i = \frac{\rho_{i+1}(t)-\rho_{i}(t)}{x_{i+1}(t)-x_{i}(t)}$.

Denote by $\b{\rho} = (\rho_1(t),...,\rho_{N-1}(t))^{T}, \b{x} = (x_0(t),...,x_N(t))^T$. The discrete energy functional $\mathcal{E}$ with respect to $\rho_h$ is given by
\begin{equation}
	\mathcal{E}_h(\b{\rho},\b{x}) = \sum_{i=1}^{N} \int_{I_i} f(\rho_h) dx .
\end{equation}
Then the changing rate of the discrete energy is calculated as
\begin{equation}
		\dot{\mathcal{E}_h} (\b{\rho},\b{x};\dot{\b{\rho}},\dot{\b{x}})= \sum_{i=1}^{N-1} \frac{\partial \mathcal{E}_h}{\partial \rho_i}\dot{\rho}_i + \sum_{i=0}^N \frac{\partial \mathcal{E}_h} {\partial x_i}\dot{x}_i,
	\label{discrete energy}
\end{equation}
where
\begin{align*}
	\frac{\partial \mathcal{E}_h}{\partial \rho_i} &= \int_I f'(\rho_h) \phi_i dx,\quad i=1,...,N-1;\\
	\frac{\partial \mathcal{E}_h} {\partial x_i} &= \int_I f'(\rho_h)\psi_i dx, \quad i=0,...,N.
\end{align*}


In order to obtain the discrete dissipation functional, we need to discretize the velocity $v(x,t)$. We use a piecewise linear function $v_h(x,t) =  \sum_{i=0}^N v_i(t)\phi_i(x,t)$ in $V_h^t$ to approximate the velocity $v(x,t)$. Denote by $\b{v} = (v_0(t),...,v_N(t))^T$. Then we calculate the discrete dissipation function $\Phi_h$ as
\begin{equation}\label{eq:discDissp1}
		\Phi_h(\b{\rho},\b{x};\b{v}) = \sum_{i=1}^{N} \int_{I_i} \frac{1}{2} \rho_h(x,t) v_h(x,t)^2 dx.
\end{equation}
Suppose that the mesh knots move with velocity $v_h$ in a Lagrange manner, i.e.
\begin{equation}
	\dot{x}_i(t) = v_h(x_i,t),\qquad i=0,...,N.
	\label{scheme1_1}
\end{equation}
Then the time derivative of $\mathcal{E}_h$ can be rewritten as
\begin{equation}\label{eq:discEnergyRate1}
	\dot{\mathcal{E}}_h (\b{\rho},\b{x}; \dot{\b{\rho}},\b{v}) =  \sum_{i=1}^{N-1} \frac{\partial \mathcal{E}_h}{\partial \rho_i}\dot{\rho}_i + \sum_{i=0}^N \frac{\partial \mathcal{E}_h} {\partial x_i}v_i.
\end{equation}
The discrete Rayleighian functional is defined as
\begin{equation*}
	\mathcal{R}_h(\b{\rho},\b{x}; \dot{\b{\rho}},\b{v}) = \Phi_h(\b{\rho};\b{x},\b{v}) + 	\dot{\mathcal{E}}_h (\b{\rho},\b{x}; \dot{\b{\rho}},\b{v}).
\end{equation*}
By the Onsager variational principle, $(\dot{\b{\rho}},\b{v})$ is obtained by
\begin{equation}
	\begin{aligned}
		&\min_{\dot{\b{\rho}},\b{v}} 	\mathcal{R}_h(\b{\rho},\b{x}; \dot{\b{\rho}},\b{v}) \\
		 & s.t. \quad \int_{I} (\partial_t \rho_h + \partial_x(\rho_h v_h) ) w_h dx = 0, \quad \forall w_h \in V_{h,0}^t.
	\end{aligned}		\label{eq:ConstraintP}
\end{equation}

To deal with the constraint in the above problem, we introduce a discrete Lagrangian multiplier $\lambda_h= \sum_{i=1}^{N-1} \lambda_i \phi_i(x,t)$. By integration by part, we have
\begin{equation*}
	\int_{I} (\partial_t \rho_h + \partial_x(\rho_h v_h) ) \lambda_h dx = \int_{I} (\partial_t \rho_h  \lambda_h - \rho_h v_h  \partial_x{\lambda_h}) dx.
\end{equation*}
Then the discrete Lagrangian functional is given by
\begin{equation}
		\tilde{\mathcal{R}}_h = \Phi_h + \dot{\mathcal{E}}_h - \int_{I} (\partial_t \rho_h \lambda_h - \rho_h v_h \partial_x \lambda_h) dx.
	\label{eq:DRay1}
\end{equation}
Notice that the problem can be seen as a discrete version of the equation~\eqref{e:Onsager}. Here we consider a weak form of
the continuum equation in the constraint. Notice that the test function is chosen to be in a finite element space $V_{h,0}^t$ instead of $V_h^t$. This will lead to a discrete multiplier $\lambda_h\in V_{h,0}^t$ in the Euler-Lagrange equation. This is consistent with
the continuous problem where the multiplier(pressure) $\lambda=0$ on $\partial\Omega$.

The Euler-Lagrange equation  corresponding to the problem \eqref{eq:DRay1} is given by
\begin{equation}
	\begin{cases}
 \frac{\partial \mathcal
		{E}_h}{\partial \rho_i}  -  \int_I \phi_i\lambda_hdx=0,& i=1,...,N-1 ;\\
	\int_I \rho_h  v_h \phi_i dx + \frac{\partial \mathcal{E}_h}{\partial x_i} - \int_I \psi_i\lambda_h dx +\int_I \rho_h \partial_x\lambda_h\phi_i dx =0,& i=0,...,N;\\
	 \int_I \partial_t \rho_h \phi_i dx - \int_I \rho_h v_h \partial_x\phi_i dx=0 ,& i=1,...,N-1.
	 \end{cases}
 \label{eq:discreteEL}
\end{equation}
Notice that $\dot{\b{x}} = \b{v}$, the equations \eqref{eq:discreteEL} can be written in an algebraic form
\begin{equation}
	\begin{cases}
	\b{M}(\b{x}(t))\b{\lambda}(t) = \frac{\partial \mathcal
		{E}_h}{\partial \b{\rho}}(\b{x}(t),\b{\rho}(t)),\\
			\b{D}(\b{x}(t),\b{\rho}(t))\dot{\b{x}}(t) =- \frac{\partial \mathcal
				{E}_h}{\partial \b{x}}(\b{x}(t),\b{\rho}(t)) + \Big(\b{B}^T(\b{x}(t)) - \b{E}^T(\b{x}(t),\b{\rho}(t)) \Big) \b{\lambda}(t) ,\\
			\b{M}(\b{x}(t))\dot{\b{\rho}}(t) + \Big(\b{B}(\b{x}(t)) - \b{E}(\b{x}(t),\b{\rho}(t)) \Big)\dot{\b{x}}(t)=0,
	\end{cases}
\label{eq:matrixEL}
\end{equation}
where  $\b{M}\in \mathbb{R}^{N-1,N-1}, \b{D}\in \mathbb{R}^{N+1,N+1}, \b{B}\in \mathbb{R}^{N-1,N+1}, \b{E}\in \mathbb{R}^{N-1,N+1},$ such that
\begin{align*}
	M_{ij}(\b{x}(t)) &= \int_{I(t)} \phi_i\phi_j dx ; \quad	D_{ij}(\b{x}(t),\b{\rho}(t)) = \int_{I(t)} \rho_h \phi_i \phi_j dx; \\
	B_{ij}(\b{x}(t)) &= \int_{I(t)} \phi_i \psi_j dx ;\quad
	E_{ij}(\b{x}(t),\b{\rho}(t)) = \int_{I(t)} \rho_h \partial_x \phi_i \phi_j dx.
\end{align*}


In the following of the subsection, we give some important properties for the differential-algebraic system \eqref{eq:matrixEL}. We first address the existence of solutions $\b{\rho}(t)$ and $\b{x}(t)$ with non-negative initial data $\b{\rho}_0$ and a initial partition $\b{x}_0$. We need some  assumptions:
\begin{itemize}
	\item[(\textbf{A1}).] The intervals $I_i(t),i = 1,...,N$ are well-defined, i.e., $x_{i-1}(t)<x_i(t)$.
	\item[(\textbf{A2}).] The discrete density function $\b{\rho}(t)$ is non-negative for all $t$.
\end{itemize}
\begin{proposition}
	Under the assumptions (A1) and (A2), there exist an unique solution for differential-algebraic system \eqref{eq:discreteEL}.
\end{proposition}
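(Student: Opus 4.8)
The plan is to exploit the triangular structure of the differential-algebraic system \eqref{eq:matrixEL} to reduce it to a system of ordinary differential equations for $\b{y}(t)=(\b{x}(t),\b{\rho}(t))$, and then to invoke the Cauchy--Lipschitz (Picard--Lindel\"of) theorem. The three blocks decouple in cascade: the first equation determines the multiplier $\b{\lambda}$ algebraically from $(\b{x},\b{\rho})$, the second then determines the velocity $\dot{\b{x}}$, and the third determines $\dot{\b{\rho}}$. Carrying out this elimination requires inverting the two mass-type matrices $\b{M}(\b{x})$ and $\b{D}(\b{x},\b{\rho})$, which is precisely where the two assumptions enter.

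First I would establish invertibility of $\b{M}$ and $\b{D}$. The matrix $\b{M}$ is the standard piecewise-linear mass matrix $M_{ij}=\int_I\phi_i\phi_j\,dx$; under (A1) every interval has positive length, so the interior basis $\{\phi_i\}_{i=1}^{N-1}$ is linearly independent in $L^2(I)$ and $\b{M}$ is symmetric positive definite. For $\b{D}$, I would note that $\b{c}^{T}\b{D}\b{c}=\int_I\rho_h\,v_h^2\,dx$ with $v_h=\sum_i c_i\phi_i$; by (A2) the weight $\rho_h\ge 0$, so $\b{D}$ is at least positive semidefinite, and it is positive definite as soon as $\rho_h>0$ almost everywhere on $I$, since then the vanishing of the quadratic form forces $v_h\equiv 0$ and hence $\b{c}=0$. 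Thus $\b{M}^{-1}$ and $\b{D}^{-1}$ exist on the set prescribed by (A1)--(A2), with $\b{D}$ nondegenerate in the physically relevant regime where the interior nodal densities remain strictly positive.

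With the inverses available, the system \eqref{eq:matrixEL} takes the explicit form
\begin{equation*}
	\b{\lambda}=\b{M}^{-1}\frac{\partial\mathcal{E}_h}{\partial\b{\rho}},\quad
	\dot{\b{x}}=\b{D}^{-1}\Big(-\frac{\partial\mathcal{E}_h}{\partial\b{x}}+(\b{B}^{T}-\b{E}^{T})\b{\lambda}\Big),\quad
	\dot{\b{\rho}}=-\b{M}^{-1}(\b{B}-\b{E})\dot{\b{x}},
\end{equation*}
which collapses to an autonomous system $\dot{\b{y}}=\b{F}(\b{y})$ on the open set $\mathcal{U}=\{(\b{x},\b{\rho}):x_{i-1}<x_i,\ \rho_i>0\}$ carved out by (A1)--(A2). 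The entries of $\b{M},\b{D},\b{B},\b{E}$ are polynomial in $\b{\rho}$ and smooth in the mesh spacings, and matrix inversion is smooth wherever the matrices are nonsingular, so to conclude it remains only to verify that $\b{F}$ is locally Lipschitz on $\mathcal{U}$, after which Picard--Lindel\"of yields a unique local-in-time solution through the prescribed initial data $(\b{x}_0,\b{\rho}_0)$.

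The main obstacle is the regularity of the energy gradient, which enters $\b{F}$ through $f'(\rho_h)=\frac{m}{m-1}\rho_h^{m-1}$. For $m\ge 2$ the map $\rho\mapsto\rho^{m-1}$ is $C^1$ and the Lipschitz estimate is routine, but for $1<m<2$ it is only H\"older continuous at the origin, so $\b{F}$ cannot be expected to be Lipschitz up to the boundary $\{\rho_i=0\}$ of $\mathcal{U}$. This is exactly why strict interior positivity is the essential content of (A2) for the present argument: on $\mathcal{U}$ one stays uniformly bounded away from $\rho_h=0$ on compact subsets, where $\rho_h^{m-1}$ is smooth and the integrals defining $\partial\mathcal{E}_h/\partial\b{\rho}$ and $\partial\mathcal{E}_h/\partial\b{x}$ depend Lipschitz-continuously on $\b{y}$. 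Combining this local Lipschitz bound with the invertibility of $\b{M}$ and $\b{D}$ gives a unique solution on the maximal time interval over which (A1) and (A2) persist, which is the assertion of the proposition.
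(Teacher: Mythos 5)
Your proposal follows essentially the same route as the paper's proof: use (A1) and (A2) to invert $\b{M}$ and $\b{D}$, eliminate $\b{\lambda}$, $\dot{\b{x}}$ and $\dot{\b{\rho}}$ in cascade so the DAE collapses to an ODE system, and conclude by Picard--Lindel\"of. You are in fact somewhat more careful than the paper on two points it glosses over --- that the bare non-negativity in (A2) only gives positive semi-definiteness of $\b{D}$ (strict interior positivity is what is really needed), and that the Lipschitz continuity of the right-hand side through $f'(\rho_h)=\frac{m}{m-1}\rho_h^{m-1}$ fails at $\rho_i=0$ when $1<m<2$ --- so your argument is a faithful and slightly sharpened version of theirs.
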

\begin{proof}
	By the assumption (A1), the mass matrix $M(\b{x}(t))$ is positive definite for any $t$. Then $\b{\lambda}(t)$ can be solved in the algebraic equation, i.e. $\b{\lambda} = M^{-1} \frac{\partial \mathcal{E}_h}{\partial \b{\rho}}$. By the assumption (A2), the matrix $D(\b{x},\b{\rho})$ is also positive definite. Then the differential-algebraic system reduces to a system of ordinary differential equations (ODEs)
	\begin{equation*}
		\begin{cases}
			\dot{\b{x}}(t) = - g_1(\b{x}(t),\b{\rho}(t)) ,\\
			\dot{\b{\rho}}(t) = g_2(\b{x}(t),\b{\rho}(t))g_1(\b{x}(t),\b{\rho}(t)),
		\end{cases}
	\end{equation*}
	where 
	\begin{align*}
	&	g_1(\b{x}(t),\b{\rho}(t))\\
	 &\quad= \b{D}^{-1}(\b{x}(t),\b{\rho}(t))\Big[\frac{\partial \mathcal
			{E}_h}{\partial \b{x}}(\b{x}(t),\b{\rho}(t)) - \Big(\b{B}^T(\b{x}(t)) - \b{E}^T(\b{x}(t),\b{\rho}(t)) \Big) \b{\lambda}(t)\Big],\\
		&g_2(\b{x}(t),\b{\rho}(t)) = \b{M}^{-1}(\b{x}(t)) \Big(\b{B}(\b{x}(t)) - \b{E}(\b{x}(t),\b{\rho}(t)\Big).
	\end{align*}
	It is easy to verify that the vector-valued functions $g_1(\b{x},\b{\rho})$ and
	 $g_2(\b{x},\b{\rho})$ have continuous partial derivatives with respect to $\b{\rho}$ and $\b{x}$ on a bounded closed convex domain by direct calculations. Therefore they are Lipschitz continuous with respect to $(\b{x}(t),\b{\rho}(t))$. By the Picard-Lindelof theorem, we know that the ODE system has a unique solution for given proper initial values.
\end{proof}
The following theorem states the discrete energy dissipation relations.
\begin{theorem}
	Under the assumptions (A1) and (A2), and let $\b{\rho}(t),\b{x}(t)$ be the solution of the equations \eqref{scheme1_1} and \eqref{eq:discreteEL}. Let $\rho_h(t,x) \in V_{h,0}^t$ be the corresponding discrete density function and $v_h \in V_h^t$ be the discrete velocity function. Then we have
	\begin{equation}
		\frac{\partial \mathcal{E}_h(\rho_h) }{\partial t} = -2\Phi_h(\rho_h,v_h)  \leq 0.
	\end{equation}
\end{theorem}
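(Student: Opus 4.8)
The plan is to prove the identity by differentiation-and-substitution, feeding the Euler--Lagrange equations \eqref{eq:discreteEL} directly into the chain-rule expression \eqref{eq:discEnergyRate1} for $\dot{\mathcal{E}}_h$. First I would recall that, by the Lagrangian mesh motion \eqref{scheme1_1}, $\dot{x}_i = v_i$, so that
\begin{equation*}
	\dot{\mathcal{E}}_h = \sum_{i=1}^{N-1} \frac{\partial \mathcal{E}_h}{\partial \rho_i}\dot{\rho}_i + \sum_{i=0}^N \frac{\partial \mathcal{E}_h}{\partial x_i} v_i.
\end{equation*}
The first equation of \eqref{eq:discreteEL} gives $\frac{\partial \mathcal{E}_h}{\partial \rho_i} = \int_I \phi_i \lambda_h\, dx$, and the second gives $\frac{\partial \mathcal{E}_h}{\partial x_i} = -\int_I \rho_h v_h \phi_i\, dx + \int_I \psi_i \lambda_h\, dx - \int_I \rho_h \partial_x \lambda_h \phi_i\, dx$. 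Substituting both and splitting the result into terms carrying $\lambda_h$ and terms carrying $v_h$ is the backbone of the computation.

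Next I would reassemble the $\lambda_h$-terms into a single integral. Using the decomposition $\partial_t \rho_h = \sum_{i=1}^{N-1} \dot{\rho}_i \phi_i + \sum_{i=0}^N v_i \psi_i$ recorded earlier in the section, the combination $\sum_i (\int_I \phi_i \lambda_h\,dx)\dot\rho_i + \sum_i (\int_I \psi_i \lambda_h\,dx)v_i$ collapses exactly to $\int_I \lambda_h\, \partial_t \rho_h\, dx$. In parallel, since $v_h = \sum_{i=0}^N v_i \phi_i$, the term $-\sum_i (\int_I \rho_h v_h \phi_i\,dx)v_i$ becomes $-\int_I \rho_h v_h^2\,dx = -2\Phi_h$ by \eqref{eq:discDissp1}, while $-\sum_i(\int_I \rho_h \partial_x \lambda_h \phi_i\,dx)v_i$ becomes $-\int_I \rho_h v_h \partial_x \lambda_h\, dx$. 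This yields the intermediate identity
\begin{equation*}
	\dot{\mathcal{E}}_h = \int_I \lambda_h\, \partial_t \rho_h\, dx - 2\Phi_h - \int_I \rho_h v_h\, \partial_x \lambda_h\, dx.
\end{equation*}

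To finish, I would invoke the discrete continuity constraint, i.e. the third line of \eqref{eq:discreteEL}, tested against $\lambda_h$. The crucial point is that $\lambda_h = \sum_{i=1}^{N-1}\lambda_i \phi_i$ lies in $V_{h,0}^t$ and is therefore an admissible test function; multiplying the $i$-th constraint by $\lambda_i$ and summing gives $\int_I \partial_t \rho_h\, \lambda_h\, dx = \int_I \rho_h v_h\, \partial_x \lambda_h\, dx$. Feeding this back cancels the first and third terms of the intermediate identity, leaving $\dot{\mathcal{E}}_h = -2\Phi_h$; non-negativity of $\Phi_h$ then follows from assumption (A2), since $\rho_h \ge 0$ forces $\Phi_h = \tfrac12\int_I \rho_h v_h^2\,dx \ge 0$. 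The main obstacle is purely bookkeeping rather than conceptual: one must keep careful track of the two distinct index ranges (the $\phi_i$ for $i=1,\dots,N-1$ governing density changes versus the $\psi_i$ for $i=0,\dots,N$ governing node motion) so that the scattered sums reconstitute the clean integrals $\int_I \lambda_h \partial_t \rho_h\,dx$ and $\int_I \rho_h v_h^2\,dx$ exactly, and to recognize that the consistency between $\lambda_h \in V_{h,0}^t$ and the space of test functions in the constraint is precisely what makes the cross term cancel.
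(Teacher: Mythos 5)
Your proposal is correct and follows essentially the same route as the paper's proof: substitute the first two Euler--Lagrange equations of \eqref{eq:discreteEL} into the chain-rule expression \eqref{eq:discEnergyRate1}, regroup the $\lambda_h$-terms into $\int_I \lambda_h \partial_t\rho_h\,dx - \int_I \rho_h v_h \partial_x\lambda_h\,dx$, and cancel them via the discrete continuity constraint tested with $\lambda_h \in V_{h,0}^t$, leaving $-\int_I \rho_h v_h^2\,dx = -2\Phi_h \le 0$ by (A2). The only difference is cosmetic bookkeeping in how the intermediate sums are displayed.
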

\begin{proof}
	The proof is given by  straightforward calculations
	\begin{equation*}
		\begin{split}
			&\frac{\partial \mathcal{E}_h(\rho_h) }{\partial t} = \sum_{i=1}^{N-1} \frac{\partial \mathcal{E}_h}{\partial \rho_i}\dot{\rho}_i + \sum_{i=0}^N \frac{\partial \mathcal{E}_h} {\partial x_i}\dot{x}_i  \\
			&=	 \sum_{i=1}^{N-1} \int_I \phi_i \lambda_h dx \dot{\rho_i}	+  \sum_{i=0}^{N}\Big( -\int_I \rho_h \partial_x \lambda_h \phi_i dx -\int_I \rho_h v_h \phi_i dx +\int_I \psi_i \lambda_h dx\Big) v_i \\
			&= \sum_{i=1}^{N-1} \Big(\int_I \partial_t \rho_h \phi_i dx - \int_I \rho_h v_h \partial_x \phi_i dx \Big)\lambda_i - \sum_{i=0}^{N}\int_I \rho_h v_h \phi_i dx v_i \\
			&= -\int_I \rho_h v_h^2 dx = -2\Phi_h(\rho_h,v_h) .
		\end{split}
	\end{equation*}
	By the assumption (A2), the function $\rho_h(x,t) \geq 0$ for all $x \in I(t)$. Then we can easily see that
	\begin{equation*}
		\int_I \rho_h v_h^2 dx \geq 0.
	\end{equation*}
This leads to the proof of the proposition.
\end{proof}

Finally, we show a property of the semi-discrete problem which is related to the mass conservation.
\begin{proposition}\label{prop:MassConserv0}
	Under the assumptions (A1) and (A2), and let $\b{\rho}(t),\b{x}(t)$ be the solution of the equations \eqref{scheme1_1} and \eqref{eq:discreteEL}. We have the following relations,
	\begin{equation}
		\frac{d}{dt} \int_{I(t)} \rho_h(x,t) \phi_i(x,t) dx =0 ,\quad i=1,...,N-1.
		\label{eq:mass_cons}
	\end{equation}
\end{proposition}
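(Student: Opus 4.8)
The plan is to differentiate the moving-domain integral directly and to recognize that the result is precisely the left-hand side of the third Euler--Lagrange relation in \eqref{eq:discreteEL}, which vanishes. Write $F_i(t) := \int_{I(t)}\rho_h(x,t)\phi_i(x,t)\,dx$ and split it over the moving elements, $F_i(t)=\sum_{k=1}^N\int_{x_{k-1}(t)}^{x_k(t)}\rho_h\phi_i\,dx$. First I would apply the Leibniz (Reynolds transport) rule element by element, where the endpoints $x_{k-1}(t),x_k(t)$ move with velocities $\dot x_{k-1},\dot x_k$. Because the integrand $\rho_h\phi_i$ is continuous across interior nodes, the boundary contributions from adjacent elements telescope and cancel; at the two global endpoints $x_0=a$ and $x_N=b$ the remaining terms vanish since $\rho_h(a,t)=\rho_h(b,t)=0$ (equivalently $\phi_i(a)=\phi_i(b)=0$ for interior $i$). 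Hence $\dot F_i=\int_{I(t)}\partial_t(\rho_h\phi_i)\,dx=\int_I(\partial_t\rho_h)\phi_i\,dx+\int_I\rho_h(\partial_t\phi_i)\,dx$, where $\partial_t$ denotes the partial derivative at fixed $x$.

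The crux of the argument is a transport identity for the moving basis functions: on the interior of each element one has $\partial_t\phi_i+v_h\,\partial_x\phi_i=0$. This reflects the fact that, under the Lagrangian mesh motion \eqref{scheme1_1} with $\dot x_j=v_h(x_j)=v_j$, the mesh velocity field coincides with $v_h$ and each hat function is frozen in the local reference coordinate $\xi=(x-x_{k-1})/(x_k-x_{k-1})$, so its material derivative along the mesh flow vanishes. I would verify this by a short direct computation on $I_i$ (and $I_{i+1}$), inserting the explicit affine forms of $\phi_i$ and $v_h$ and cancelling the terms proportional to $\dot x_{i-1}$ and $\dot x_i$. Granting the identity, $\int_I\rho_h\,\partial_t\phi_i\,dx=-\int_I\rho_h v_h\,\partial_x\phi_i\,dx$, the finitely many kinks at the nodes being a null set.

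Combining the two steps gives $\dot F_i=\int_I(\partial_t\rho_h)\phi_i\,dx-\int_I\rho_h v_h\,\partial_x\phi_i\,dx$, which is exactly the left-hand side of the third equation in \eqref{eq:discreteEL}; by that equation it equals zero for $i=1,\dots,N-1$, establishing \eqref{eq:mass_cons}. Equivalently, the same computation can be cast in matrix form: $F=\b{M}(\b{x})\b{\rho}$, so $\dot F=\dot{\b{M}}\b{\rho}+\b{M}\dot{\b{\rho}}$, and after substituting the third relation of \eqref{eq:matrixEL}, namely $\b{M}\dot{\b{\rho}}=-(\b{B}-\b{E})\dot{\b{x}}=-(\b{B}-\b{E})\b{v}$, the claim reduces to the identity $\dot{\b{M}}\b{\rho}=(\b{B}-\b{E})\b{v}$, which is precisely the matrix encoding of the transport identity above.

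I expect the transport identity $\partial_t\phi_i=-v_h\,\partial_x\phi_i$ to be the only genuinely delicate point: it is where the Lagrangian mesh assumption \eqref{scheme1_1} enters, and it must be verified respecting the piecewise-linear structure of $\phi_i$ and $v_h$. By contrast, the telescoping of the Reynolds boundary terms is routine once the continuity of $\rho_h\phi_i$ and the homogeneous boundary values are noted.
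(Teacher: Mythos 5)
Your proposal is correct and follows essentially the same route as the paper: a Reynolds/Leibniz transport computation on the moving elements, the key identity $\partial_t\phi_i + v_h\,\partial_x\phi_i = 0$ coming from the Lagrangian mesh motion \eqref{scheme1_1}, and then recognition of the residual as the third equation of \eqref{eq:discreteEL}. The paper verifies the transport identity via $\partial_t\phi_i=\sum_j\partial_{x_j}\phi_i\,\dot{x}_j$ with $\partial_{x_j}\phi_i=-\phi_j\partial_x\phi_i$ rather than in the local reference coordinate, but this is only a cosmetic difference.
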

\begin{proof}
	Using integration by parts, we can obtain
	\begin{align*}
		&\frac{d}{dt}\int_{I(t)} \rho_h(x,t) \phi_i(x,t) dx \\
		&= \int_{I_i(t) \cup I_{i+1}(t)} (\partial_t \rho_h + \partial_x(\rho_h v_h))\phi_i + \rho_h (\partial_t \phi_i + v_h\partial_x \phi_i) dx .
	\end{align*}
	Notice that
	\begin{equation*}
		\partial_t \phi_i = \sum_{j=i-1}^{i+1} \partial_{x_j} \phi_i \dot{x_j}  = \sum_{j=i-1}^{i+1} (-\phi_j\partial_x \phi_i) \dot{x_j}  = -(\sum_{j=i-1}^{i+1} \phi_j \dot{x_j} )\partial_x \phi_i = -v_h \partial_x \phi_i.
	\end{equation*}
	This leads to $\partial_t \phi_i + v_h \partial_x \phi_i = 0$. Thus by \eqref{eq:discreteEL} we have
	\begin{equation*}
		\frac{d}{dt}\int_{I(t)} \rho_h(x,t) \phi_i(x,t) dx = 0,\quad i=1,...,N-1.
	\end{equation*}
\end{proof}

\begin{myRem}\label{remark1}
	We can rewrite the equations \eqref{eq:mass_cons} in a vector form
	\begin{equation*}
		\frac{d}{dt} \Big(\b{M}(t)\b{\rho}(t)\Big)  = 0,
	\end{equation*}
	Integrating the equation from $0$ to $T$, we have $\b{M}(T)\b{\rho}(T) = \b{M}(0)\b{\rho}(0)$.  Notice that this does not imply the exact mass conservation property. We will give more discussions on this issue in the next section. 
	In addition,
	given a discrete initial value $\b{\rho}(0)$, it is easy to show $\b{M}(0)\b{\rho}(0) \geq 0$. However, since the mass matrix $\b{M}(T)$ is not a M-matrix in general, we can not prove the positivity of $\b{\rho}(T)$. That is why we need the assumption (A2). To guarantee the positivity of $\b{\rho}(T)$, one can use the lumped mass method. This is beyond 
	the scope of our studies presented in this paper. 
\end{myRem}

\subsection{Full discretization}
In order to get a fully discrete numerical scheme, we  introduce a proper temporal discretization to the semi-discrete equations \eqref{scheme1_1} and \eqref{eq:discreteEL}. Let the time step be $\tau$, then we set the solution at $t = t^n$
as  $\rho_i^n = \rho_i(t^n),x_i^n=x_i(t^n)$, and the solution at $t^{n+1}$ as $\rho_i^{n+1}, x_i^{n+1}$. We define the finite difference operator $\bar{\partial}\rho^n = (\rho^{n+1}-\rho^n)/\tau$, then let $v_i^{n+1} :=\bar{\partial}x_i^n = (x_i^{n+1}-x_i^n)/\tau$. We first consider {\it an explicit Euler scheme} as follows

\begin{align}
	\int_{I^n} \phi_i^n \lambda_h^{n+1}dx&=\frac{\partial \mathcal
		{E}_h^n}{\partial \rho_i^n} ,\qquad\qquad\qquad\qquad  i=1,...,N-1; \label{eq:ExEuler1}\\
	\int_{I^n} \rho_h^n v_h^{n+1} \phi_i^n dx &= -\frac{\partial \mathcal{E}_h^n}{\partial x_i^n} + \int_{I^n} \psi_i^n\lambda_h^{n+1} dx -\int_{I^n} \rho_h^n \partial_x \lambda_h^{n+1} \phi_i^n dx ,\nonumber \\
	&\qquad\qquad\qquad\qquad\qquad\qquad i=0,...,N;  \label{eq:ExEuler2}
			 	\end{align}
			 			\begin{align}
	\int_{I^n} \Big(\sum_{j=1}^{N-1} \bar{\partial}{\rho}_j^n\phi_j^n \Big) \phi_i^n &=- \int_{I^n} \Big(\sum_{j=0}^{N} v_j^{n+1} \psi_j^n \Big) \phi_i^n dx + \int_{I^n} \rho_h^n v_h^{n+1} \partial_x\phi_i^n dx ,\nonumber\\
	&\qquad\qquad\qquad\qquad\qquad\qquad i=1,...,N-1. \label{eq:ExEuler3}
\end{align}
The above scheme is decoupled and easy to implement. We need only to solve a few linear
equation successively in each time step. This is efficient in general case. A drawback of the scheme is that
we show choose a small time step $\tau$   to guarantee  numerical stability. 

Another approach is to adopt  {\it an implicit  numerical scheme} for the temporal discretization as follows,
\begin{align}
	\int_{I^n} \phi_i^n \lambda_h^{n+1} dx &=\frac{\partial \mathcal
		{E}_h^{n+1}}{\partial \rho_i^{n+1}},\qquad\qquad\qquad\qquad  i=1,...,N-1;  \label{scheme2_2}\\
	\int_{I^n} \rho_h^n v_h^{n+1} \phi_i^n dx &=- \frac{\partial \mathcal{E}_h^n}{\partial x_i^n} + \int_{I^n} \psi_i^n \lambda_h^{n+1} dx -\int_{I^n} \rho_h^n \partial_x \lambda_h^{n+1} \phi_i^n dx ,\nonumber \\
		&\qquad\qquad\qquad\qquad\qquad\qquad
	 i=0,...,N; \label{scheme2_3} \\
	\int_{I^n} \Big(\sum_{j=1}^{N-1} \bar{\partial}{\rho}_j^n\phi_j^n \Big) \phi_i^n &=- \int_{I^n} \Big (\sum_{j=0}^N v_j^{n+1} \psi_j^n \Big)  \phi_i^n dx + \int_{I^n} \rho_h^n v_h^{n+1} \partial_x\phi_i^n dx ,\nonumber \\
		&\qquad\qquad\qquad\qquad\qquad\qquad i=1,...,N-1.\label{scheme2_4}
\end{align}
In the following theorem, we prove the energy stability of  the implicit scheme.

\begin{theorem}
	Let $\rho_i^n,x_i^n$ and $\rho_i^{n+1},x_i^{n+1}$ be the solutions of the equations  \eqref{scheme2_2}-\eqref{scheme2_4} at $t = t^n$ and $t=t^{n+1}$, respectively. We have the following result:
	\begin{equation}
		\mathcal{E}_h(\b{\rho}^{n+1},\b{x}^{n+1}) \leq \mathcal{E}_h(\b{\rho}^{n},\b{x}^n) .
	\end{equation}
\end{theorem}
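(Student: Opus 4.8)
The plan is to mimic the discrete chain-rule computation that established the semi-discrete relation in the previous theorem, replacing the time derivatives by finite differences and controlling the resulting errors through the convexity of the energy. The structural observation I would establish first is that on each cell the discrete energy can be written as $\mathcal{E}_h(\b{\rho},\b{x})=\sum_{i}(x_i-x_{i-1})F(\rho_{i-1},\rho_i)$ with $F(a,b)=\int_0^1 f\big((1-s)a+sb\big)\,ds$. Since $f(\rho)=\frac{1}{m-1}\rho^m$ is convex for $m>1$ and $\rho\ge 0$, each $F$ is convex, so that $\mathcal{E}_h$ is \emph{convex in} $\b{\rho}$ for a fixed admissible mesh (by (A1) the weights $x_i-x_{i-1}$ are positive), while it is \emph{affine in} $\b{x}$ for fixed $\b{\rho}$ because the cell lengths enter linearly and $F$ does not depend on $\b{x}$.

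Using this, I would split the energy increment as
\[
\mathcal{E}_h^{n+1}-\mathcal{E}_h^{n} = \big[\mathcal{E}_h(\b{\rho}^{n+1},\b{x}^{n+1})-\mathcal{E}_h(\b{\rho}^{n},\b{x}^{n+1})\big] + \big[\mathcal{E}_h(\b{\rho}^{n},\b{x}^{n+1})-\mathcal{E}_h(\b{\rho}^{n},\b{x}^{n})\big].
\]
Convexity in $\b{\rho}$ bounds the first bracket above by the tangent term $\sum_i \frac{\partial \mathcal{E}_h^{n+1}}{\partial\rho_i^{n+1}}(\rho_i^{n+1}-\rho_i^n)$, while affineness in $\b{x}$ makes the second bracket \emph{exactly} $\sum_i \frac{\partial \mathcal{E}_h^{n}}{\partial x_i^{n}}(x_i^{n+1}-x_i^n)$ (an affine function coincides with its tangent, so the $x$-gradient may be frozen at $t^n$). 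This produces the discrete analogue of the energy-gradient identity as an inequality, with the $\rho$-gradient taken implicitly at $t^{n+1}$ and the $x$-gradient explicitly at $t^n$, precisely the time levels appearing in \eqref{scheme2_2}-\eqref{scheme2_4}.

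The remaining steps are algebraic and parallel the proof of the preceding theorem. Writing $\bar\partial\rho_h^n=\sum_j\bar\partial\rho_j^n\phi_j^n$ and $v_h^{n+1}=\sum_j v_j^{n+1}\phi_j^n$, I substitute \eqref{scheme2_2} to turn the $\rho$-sum into $\tau\int_{I^n}\bar\partial\rho_h^n\,\lambda_h^{n+1}\,dx$, and \eqref{scheme2_3} (together with $x_i^{n+1}-x_i^n=\tau v_i^{n+1}$) to rewrite the $x$-sum as $\tau\big(-\int_{I^n}\rho_h^n(v_h^{n+1})^2+\int_{I^n}(\sum_j v_j^{n+1}\psi_j^n)\lambda_h^{n+1}-\int_{I^n}\rho_h^n v_h^{n+1}\partial_x\lambda_h^{n+1}\big)\,dx$. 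I then test the mass equation \eqref{scheme2_4} against $\lambda_h^{n+1}=\sum_i\lambda_i^{n+1}\phi_i^n$ to eliminate $\int_{I^n}\bar\partial\rho_h^n\,\lambda_h^{n+1}\,dx$. The terms involving $\int_{I^n}(\sum_j v_j^{n+1}\psi_j^n)\lambda_h^{n+1}\,dx$ and $\int_{I^n}\rho_h^n v_h^{n+1}\partial_x\lambda_h^{n+1}\,dx$ then cancel in pairs, leaving
\[
\mathcal{E}_h^{n+1}-\mathcal{E}_h^{n}\le -\tau\int_{I^n}\rho_h^n\,(v_h^{n+1})^2\,dx,
\]
which is nonpositive by assumption (A2), giving the claim.

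The main obstacle is the first two paragraphs: one must justify the exact affineness of $\mathcal{E}_h$ in $\b{x}$ and its convexity in $\b{\rho}$, and verify that these yield precisely the gradient terms at the time levels used in the scheme (checking in particular that the $\b{x}$- and $\b{\rho}$-gradients obtained this way agree with the integral formulas $\int_I f'(\rho_h)\psi_i\,dx$ and $\int_I f'(\rho_h)\phi_i\,dx$). The affineness in $\b{x}$ is what makes the mesh-motion contribution error-free, so the explicit $x$-gradient at $t^n$ is exact rather than only first-order; convexity in $\b{\rho}$ is exactly what the implicit treatment of $\frac{\partial\mathcal{E}_h^{n+1}}{\partial\rho_i^{n+1}}$ in \eqref{scheme2_2} is designed to exploit, and had the $\rho$-gradient been taken explicitly the convexity inequality would point the wrong way. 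The subsequent cancellations are routine once the correct tangent/affine bookkeeping and the consistent use of the old-mesh basis $\phi_i^n$ are in place.
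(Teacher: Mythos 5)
Your proposal is correct and follows essentially the same route as the paper: the split through the intermediate state $(\b{\rho}^{n},\b{x}^{n+1})$, with convexity of $f$ handling the $\b{\rho}$-increment and the exact (affine) dependence on the cell lengths handling the $\b{x}$-increment, is precisely the paper's decomposition written on the reference coordinate $r_i^n(s)$, and the subsequent substitution of \eqref{scheme2_2}--\eqref{scheme2_4} and cancellation leading to $-\tau\int_{I^n}\rho_h^n (v_h^{n+1})^2\,dx\le 0$ is identical. The only difference is presentational: you package the reference-element computation as explicit convexity/affineness statements about $\mathcal{E}_h(\b{\rho},\b{x})=\sum_i(x_i-x_{i-1})F(\rho_{i-1},\rho_i)$, which the paper instead verifies by direct calculation of $\partial\mathcal{E}_h/\partial x_i$ and $\partial\mathcal{E}_h/\partial\rho_i$ in the $s$-variable.
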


\begin{proof}
	Denote by $r_i^n(s) =x_{i-1}^n+s(x_i^n-x_{i-1}^n)$, $\rho_h\circ r_i^n  = \rho_{i-1}^n + s(\rho_i^n - \rho_{i-1}^n)$.
	Due to the convexity of function $f(\rho)$, we have the following property
	\begin{equation*}
		f(\rho_h^{n+1}\circ r_i^{n+1}) - f(\rho_h^n \circ r_i^{n} ) \leq \tau f'(\rho_h^{n+1} \circ r_i^{n+1})  \bar{\partial} (\rho_h^n \circ r_i^n),
	\end{equation*}
where $\tau = t^{n+1}-t^n$.
	By using these formulas, we have
	
	\begin{equation}
		\begin{split}
			\mathcal{E}_h(\rho_h^{n+1}) - \mathcal{E}_h(\rho_h^{n})&= \sum_{i=1}^N \Big(\int_{I_i^{n+1}} f(\rho_h^{n+1}) dx - \int_{I_i^n}  f(\rho_h^{n}) dx \Big) \\
			& = \sum_{i=1}^N \Big(\int_{0}^{1} f(\rho_h^{n+1}\circ r_i^{n+1}) |I_i^{n+1}|ds - \int_{0}^{1} f(\rho_h^n \circ r_i^{n} ) |I_i^n|ds  \Big) \\
			& = \sum_{i=1}^N \Big( \int_0^1 (f(\rho_h^{n+1}\circ r_i^{n+1}) - f(\rho_h^n \circ r_i^{n} ))|I_i^{n+1}|ds  \\
			&\quad + \int_0^1 f(\rho_h^n \circ r_i^{n} ) (|I_i^{n+1}| -|I_i^n| ) \Big) ds \\
			& \leq  \tau\sum_{i=1}^N \Big( \int_0^1 f'(\rho_h^{n+1} \circ r_i^{n+1}) |I_i^{n+1}| (\rho_h^{n+1}\circ r_i^{n+1} - \rho_h^n \circ r_i^n) ds  \\
			&\quad + \int_0^1 f(\rho_h^n \circ r_i^n)  (\bar{\partial }x_{i}^n - \bar{\partial}x_{i-1}^n) ds \Big).    
			\label{e:temp0}     
		\end{split}
	\end{equation}

Notice the expression 
	\begin{equation}\label{e:temp1}
		\mathcal{E}_h(\rho_h^n) = \int_{I^n} f(\rho_h^n) dx =\sum_{i=1}^N \int_0^1 f(\rho_h^n\circ r_i^n) |I_k^n|ds.
	\end{equation}
	From the first equality of the above equation, we know that 
	$$\frac{\partial \mathcal
		{E}_h^{n}}{\partial x_i^{n}} = \int_{I^n} f'(\rho_h^n) \psi_i^n dx. $$
From the second equality of \eqref{e:temp1} and noticing that $\rho_h\circ r_i^n  = \rho_{i-1}^n + s(\rho_i^n - \rho_{i-1}^n)$, which does not depend on $x_i^n$, we obtain
	$$\frac{\partial \mathcal
		{E}_h^{n}}{\partial x_i^{n}} = \int_0^1 f(\rho_h^n \circ r_i^n)|_{I_i^n}ds - \int_0^1 f(\rho_h^n \circ r_{i+1}^n)|_{I_{i+1}^n} ds.$$
Thus we have the formula of $\frac{\partial \mathcal
		{E}_h^{n}}{\partial x_i^{n}}$:
	\begin{align*}
	\begin{split}
		\frac{\partial \mathcal
			{E}_h^{n}}{\partial x_i^{n}} &= \int_{I^n} f'(\rho_h^n) \psi_i^n dx = \int_0^1 f(\rho_h^n \circ r_i^n)|_{I_i^n}ds - \int_0^1 f(\rho_h^n \circ r_{i+1}^n)|_{I_{i+1}^n} ds.
	\end{split}
	\end{align*}
Similary, we have the formula of $\frac{\partial \mathcal
		{E}_h^{n+1}}{\partial \rho_i^{n+1}}$:
	\begin{align*}
		\begin{split}
			\frac{\partial \mathcal
				{E}_h^{n+1}}{\partial \rho_i^{n+1}} &= \int_{I^{n+1}} f'(\rho_h^{n+1}) \phi_i^{n+1} dx \\
			&= \int_0^1 f'(\rho_h^{n+1}\circ r_i^{n+1}) |I_i^{n+1}|sds + \int_0^1 f'(\rho_h^{n+1}\circ r_{i+1}^{n+1}) |I_{i+1}^{n+1}|(1-s)ds.
		\end{split}
	\end{align*}
	Then we  derive the expressions for $\sum_{i=1}^{N-1} \frac{\partial \mathcal
		{E}_h^{n+1}}{\partial \rho_i^{n+1}} \bar{\partial} \rho_i^n $ and $\sum_{i=0}^N 	\frac{\partial \mathcal
		{E}_h^{n}}{\partial x_i^{n}} \bar{\partial}x_i^n$ as 
	\begin{equation*}
		\begin{split}
			\sum_{i=1}^{N-1} \frac{\partial \mathcal
				{E}_h^{n+1}}{\partial \rho_i^{n+1}} \bar{\partial} \rho_i^n &=\sum_{i=1}^N  \int_0^1 f'(\rho_h^{n+1} \circ r_i^{n+1}) |I_i^{n+1}| ((1-s)\bar{\partial}\rho_{i-1}^{n} - s \bar{\partial} \rho_i^n) ds\\
			& =\sum_{i=1}^N  \int_0^1 f'(\rho_h^{n+1} \circ r_i^{n+1}) |I_i^{n+1}| (\rho_h^{n+1}\circ r_i^{n+1} - \rho_h^n \circ r_i^n) ds  ,
		\end{split}
	\end{equation*}
and
	\begin{equation*}
		\sum_{i=0}^N 	\frac{\partial \mathcal
			{E}_h^{n}}{\partial x_i^{n}} \bar{\partial}x_i^n=\sum_{i=1}^N\int_0^1 f(\rho_h^n \circ r_i^n)  (\bar{\partial }x_{i}^n - \bar{\partial}x_{i-1}^n) ds,
	\end{equation*}
respectively. By using these relations, the equation \eqref{e:temp0} is reduced to
	\begin{equation*}
		\mathcal{E}_h(\rho_h^{n+1}) - \mathcal{E}_h(\rho_h^{n}) \leq  \tau\sum_{i=1}^{N-1} \frac{\partial \mathcal
			{E}_h^{n+1}}{\partial \rho_i^{n+1}}\bar{\partial}\rho_i^n + \tau\sum_{i=0}^{N} \frac{\partial \mathcal{E}_h^n}{\partial x_i^n} \bar{\partial}x_i^n .  
	\end{equation*}
	By using the equations \eqref{scheme2_2}-\eqref{scheme2_4}, we can  further calculate 
	\begin{equation*}
		\begin{split}
			&\tau\sum_{i=1}^{N-1} \frac{\partial \mathcal
				{E}_h^{n+1}}{\partial \rho_i^{n+1}}\bar{\partial}\rho_i^n + \tau\sum_{i=0}^{N} \frac{\partial \mathcal{E}_h^n}{\partial x_i^n} \bar{\partial}x_i^n \\
			&= \tau \sum_{i=1}^{N-1} \Big(	\int_{I^n} \phi_i^n \lambda_h^{n+1} dx\Big) \bar{\partial} \rho_i^n \\
			&\quad + \tau \sum_{i=0}^N\Big(\int_{I^n} \psi_i^n \lambda_h^{n+1} dx -\int_{I^n} \rho_h^n \partial_x \lambda_h^{n+1} \phi_i^n dx -\int_{I^n} \rho_h^n v_h^{n+1} \phi_i^n dx \Big)\bar{\partial}x_i^n\\
			& = \tau \sum_{i=1}^{N-1}\Big(\int_{I^n} \Big(\sum_{j=1}^{N-1} \bar{\partial}{\rho}_j^n\phi_j^n \Big) \phi_i^n + \int_{I^n} \Big (\sum_{j=0}^N v_j^{n+1} \psi_j^n \Big)  \phi_i^n dx - \int_{I^n} \rho_h^n v_h^{n+1} \partial_x\phi_i^n dx \Big)\lambda_i^n\\
			&\quad - \tau \sum_{i=0}^{N} \Big( \int_{I^n} \rho_h^n v_h^{n+1} \phi_i^n dx \Big) \bar{\partial}x_i^n = - \tau  \int_{I^n} \rho_h^n (v_h^{n+1} )^2 dx \leq 0.
		\end{split}
	\end{equation*}
	Thus we obtain the following result:
	\begin{equation*}
		\mathcal{E}_h(\rho_h^{n+1}) - \mathcal{E}_h(\rho_h^{n}) \leq 0.
	\end{equation*}
\end{proof}

\subsection{Implementations of the numerical schemes}
The fully discrete explicit scheme \eqref{eq:ExEuler1}-\eqref{eq:ExEuler3} can be written as
\begin{align}
	\b{M}^n \b{\lambda}^{n+1} &=  \frac{\partial \mathcal{E}_h^n}{\partial \b{\rho}^n} , \label{matrix_scheme1_2}\\
	\b{D}^n \b{v}^{n+1}  &= - \frac{\partial \mathcal{E}_h^n}{\partial \b{x}^n} + (\b{B}^n - \b{E}^n)^T \b{\lambda}^{n+1}, \label{matrix_scheme1_3}\\
	\b{M}^n \bar{\partial}\b{\rho}^n &= -(\b{B}^n-\b{E}^n)\b{v}^{n+1},\label{matrix_scheme1_4}
\end{align}
where $\b{M}^n\in \mathbb{R}^{N-1,N-1}, \b{D}^n\in \mathbb{R}^{N+1,N+1}, \b{B}^n\in \mathbb{R}^{N-1,N+1}, \b{E}^n\in \mathbb{R}^{N-1,N+1},$ such that
\begin{align*}
	M_{ij}^n &= \int_{I^n} \phi_i^n \phi_j^n dx ;\quad	D_{ij}^n = \int_{I^n} \rho_h^n \phi_i^n \phi_j^n dx; \\
	B_{ij}^n &= \int_{I^n} \phi_i^n \psi_j^n dx ; \quad E_{ij}^n = \int_{I^n} \rho_h^n \partial_x \phi_i^n \phi_j^n dx .
\end{align*}
Since $\b{M}^n$ is the mass matrix and  $\b{D}^n$ is a modified mass matrix, both of them are positive definite if the value $\b{\rho}^n$ and $\b{x}^n$ satisfy Assumptions (A1) and (A2). In implementations, we can first solve \eqref{matrix_scheme1_2} to compute $\b{\lambda}^{n+1}$. Then we successively solve \eqref{matrix_scheme1_3} to obtain $\b{v}^{n+1}$ and \eqref{matrix_scheme1_4} to get $\b{\rho}^{n+1}$. Finally, we update $\b{x}^{n+1}$ by $\b{x}^{n+1} = \b{x}^n + \tau \b{v}^{n+1}$. The linear system \eqref{matrix_scheme1_2}-\eqref{matrix_scheme1_4} are decoupled and are easy to solve. 

The implicit scheme \eqref{scheme2_2}-\eqref{scheme2_4} can be written as 
\begin{align}
	\b{M}^n \b{\lambda}^{n+1} - \frac{\partial \mathcal{E}_h^{n+1}}{\partial \b{\rho}^{n+1}}  &=0 , \label{matrix_scheme2_2}\\
	\b{D}^n \b{v}^{n+1}   -(\b{B}^n - \b{E}^n)^T \b{\lambda}^{n+1}&= - \frac{\partial \mathcal{E}_h^n}{\partial \b{x}^n} , \label{matrix_scheme2_3}\\
	\b{M}^n \bar{\partial}\b{\rho}^n+(\b{B}^n-\b{E}^n)\b{v}^{n+1} &=0.\label{matrix_scheme2_4}
\end{align}
Since the coupled system \eqref{matrix_scheme2_2}-\eqref{matrix_scheme2_4} is nonlinear, we can choose a fixed-point iteration to solve them. The iteration termination condition in our implementations is $\|\b{\rho}^{n+1,k+1}-\b{\rho}^{n+1,k}\|_{l^{\infty}}\leq \epsilon$, $\|\b{x}^{n+1,k+1}-\b{x}^{n+1,k} \|_{l^{\infty}}\leq \epsilon $ with $\epsilon = 10^{-6}$. In each iterate step, we solve a linear problem. 

%

\begin{myRem}
	The derivation and the theoretical results in this section can be generalized to higher dimensional cases straightforwardly. More details on the methods  in the two dimensional case are given in the appendix. 
	Numerical examples in two dimensions are given in the Section 5.
\end{myRem}
\section{Modified numerical schemes}
Notice that  the conservation of the total mass of the semi-discrete scheme \eqref{eq:matrixEL} in the previous section is not guaranteed, as discussed in Remark~\ref{remark1}. This is due to the fact that we assume the homogeneous boundary condition of the Lagrange multiplier(or the pressure) on the free boundary, i.e. $\lambda_0=\lambda_N=0$. 
This condition is consistent with the continuous problem(see the equation \eqref{eq:EL}). 
However, we need consider an alternative boundary condition to preserve the mass conservation  for the semi-discrete scheme.

We still use a piecewisely linear approximation for $\rho$ and allow the grid nodes move in a Lagrange manner.
Namely, we let $\rho_h=\sum_{i=1}^{N-1}\rho_i\phi_i$ and $v_i=\dot{x}_i$ for $i=0, \cdots, N$.  In comparison with the previous derivation, we do not propose the boundary condition for the Lagrangian multiplier and let $\widehat{\lambda}_h =\sum_{i=0}^{N}\lambda_i \phi_i$. 
Then  the augmented Rayleighean functional is defined as 
\begin{equation}
\widehat{\mathcal R}_h= \Phi_h +\dot{\mathcal{E}}_h - \int_I(\partial_t \rho_h + \partial_x(\rho_h v_h) )\widehat \lambda_h dx.
\end{equation}
This corresponding variational problem is
\begin{equation}
	\begin{aligned}
		&\min_{\dot{\b{\rho}},\b{v}} 	\mathcal{R}_h(\b{\rho},\b{x}; \dot{\b{\rho}},\b{v}) \\
		 &s.t. \quad \int_{I} (\partial_t \rho_h + \partial_x(\rho_h v_h) ) w_h dx = 0, \quad \forall w_h \in V_{h}^t.
	\end{aligned}		\label{eq:ConstraintP1}
\end{equation}
The only difference from the problem \eqref{eq:ConstraintP} is that the test function  $w_h$ belongs to $V_h^t$ instead of its subspace $V_{h,0}^t$. 
By  similar derivations in the previous section, we can derive the following Euler-Lagrange equation
\begin{equation}
	\begin{cases}
 \frac{\partial \mathcal
		{E}_h}{\partial \rho_i}  -  \int_I \phi_i\lambda_hdx=0,& i=1,...,N-1 ;\\
	\int_I \rho_h  v_h \phi_i dx + \frac{\partial \mathcal{E}_h}{\partial x_i} - \int_I \psi_i\lambda_h dx +\int_I \rho_h \partial_x\lambda_h\phi_i dx =0,& i=0,...,N;\\
	 \int_I \partial_t \rho_h \phi_i dx - \int_I \rho_h v_h \partial_x\phi_i dx=0 ,& i=0,...,N.
	 \end{cases}
 \label{eq:discreteEL1}
\end{equation}
In an algebraic form, the equation is rewritten as
\begin{equation}
	\begin{cases}
	\widehat{\b{M}}(\b{x}(t))\widehat{\b{\lambda}}(t) = \frac{\partial \mathcal
		{E}_h}{\partial \b{\rho}}(\b{x}(t),\b{\rho}(t)),\\
	\b{D}(\b{x}(t),\b{\rho}(t))\dot{\b{x}}(t) =- \frac{\partial \mathcal
		{E}_h}{\partial \b{x}}(\b{x}(t),\b{\rho}(t)) + \Big(\widehat{\b{B}}(\b{x}(t)) - \widehat {\b{E}}(\b{x}(t),\b{\rho}(t)) \Big)^{T} \widehat{\b{\lambda}}(t) ,\\
\widehat{\b{M}}^T(\b{x}(t))\dot{\b{\rho}}(t) + \Big(\widehat {\b{B}}(\b{x}(t)) - \widehat {\b{E}}(\b{x}(t),\b{\rho}(t)) \Big)\dot{\b{x}}(t)=0,
	\end{cases}
\label{eq:matrixEL1}
\end{equation}
where $\b{D}$ is the same as in \eqref{eq:matrixEL}, $\widehat{\b{M}}\in \mathbb{R}^{N-1,N+1}, \widehat{\b{B}}\in \mathbb{R}^{N+1,N+1}, \widehat{\b{E}}\in \mathbb{R}^{N+1,N+1}$ such that
\begin{align*}
&\widehat{{M}}_{ij}(\b{x}(t)) = \int_{I(t)} \phi_i\phi_j dx ; 
\quad
\widehat	{{B}}_{ij}(\b{x}(t)) = \int_{I(t)} \phi_i \psi_j dx ;\\
&\widehat{{E}}_{ij}(\b{x}(t),\b{\rho}(t)) = \int_{I(t)} \rho_h \partial_x \phi_i \phi_j dx.
\end{align*}

For the semi-discrete problem~\eqref{eq:discreteEL1}, we have the following proposition,
\begin{proposition}\label{prop:MassConserv1}
	Under the assumptions (A1) and (A2),  let $\b{\rho}(t),\b{x}(t)$ be the solution of the equations \eqref{scheme1_1} and \eqref{eq:discreteEL1}, then  the total mass is conserved in the sense that, 
	\begin{equation}
		\frac{d}{dt} \int_{I(t)} \rho_h(x,t)  dx =0.
		\label{eq:mass_cons1}
	\end{equation}
\end{proposition}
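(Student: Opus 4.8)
The plan is to exploit the single structural change between this modified scheme and the one of Proposition~\ref{prop:MassConserv0}: in \eqref{eq:ConstraintP1} the continuity constraint is tested against the full space $V_h^t$, so the third equation of \eqref{eq:discreteEL1} now holds for every index $i=0,\dots,N$, not merely the interior ones. Since the Lagrange $P_1$ basis is a partition of unity, $\sum_{i=0}^{N}\phi_i(x,t)\equiv 1$ on $I(t)$, the constant function $1$ becomes an admissible test function. This is exactly the extra ingredient needed to promote the weighted relation $\frac{d}{dt}\big(\b{M}\b{\rho}\big)=0$ of Remark~\ref{remark1} to genuine conservation of the total mass, and it is what fails in the original scheme because $\sum_{i=1}^{N-1}\phi_i\not\equiv 1$.

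First I would convert the rate of change of the total mass into an Eulerian integral by Reynolds' transport theorem on the moving interval $I(t)=[a(t),b(t)]$,
\begin{equation*}
	\frac{d}{dt}\int_{I(t)}\rho_h\,dx=\int_{I(t)}\partial_t\rho_h\,dx+\rho_h(b,t)\dot b-\rho_h(a,t)\dot a,
\end{equation*}
where $\partial_t\rho_h$ is the partial derivative at fixed $x$, coinciding with the expression $\sum_i\dot\rho_i\phi_i+\sum_i\dot x_i\psi_i$ used throughout the paper. Because $\rho_h=\sum_{i=1}^{N-1}\rho_i\phi_i$ still belongs to $V_{h,0}^t$, it vanishes at both endpoints, so the two boundary flux terms drop out and the identity reduces to $\frac{d}{dt}\int_{I(t)}\rho_h\,dx=\int_{I(t)}\partial_t\rho_h\,dx$.

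It then remains to show that this Eulerian integral vanishes. Summing the third equation of \eqref{eq:discreteEL1} over all $i=0,\dots,N$ and using $\sum_i\phi_i\equiv 1$, hence $\sum_i\partial_x\phi_i\equiv 0$, yields
\begin{equation*}
	\int_{I(t)}\partial_t\rho_h\,dx-\int_{I(t)}\rho_h v_h\Big(\sum_{i=0}^{N}\partial_x\phi_i\Big)\,dx=\int_{I(t)}\partial_t\rho_h\,dx=0,
\end{equation*}
which closes the argument. Equivalently, one may insert the admissible test function $w_h=1$ directly into the constraint of \eqref{eq:ConstraintP1} and integrate the convective term by parts, the resulting boundary term $[\rho_h v_h]_a^b$ again vanishing by $\rho_h|_{\partial I}=0$.

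The computation is short, so the statement is not so much an obstacle as a bookkeeping exercise; the only point genuinely requiring care is the moving boundary. One must confirm that both the transport-theorem flux terms and the integrated convective term $\int_{I(t)}\partial_x(\rho_h v_h)\,dx=[\rho_h v_h]_a^b$ vanish, and this is guaranteed solely by the homogeneous Dirichlet structure $\rho_h\in V_{h,0}^t$. I would also note explicitly, for contrast with Proposition~\ref{prop:MassConserv0}, that the decisive difference is the inclusion of the two boundary basis functions $\phi_0,\phi_N$, which is precisely what completes the partition of unity and turns the interior relation into total mass conservation.
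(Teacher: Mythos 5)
Your proof is correct, and it isolates exactly the right structural point: enlarging the test space to $V_h^t$ makes the constant function admissible through the partition of unity $\sum_{i=0}^{N}\phi_i\equiv 1$. The route differs mildly from the paper's. The paper first establishes the per-basis-function identities $\frac{d}{dt}\int_{I(t)}\rho_h\phi_i\,dx=0$ for every $i=0,\dots,N$ by reusing the computation from Proposition~\ref{prop:MassConserv0}, whose key ingredient is the advection identity $\partial_t\phi_i+v_h\partial_x\phi_i=0$ for the moving basis functions, and then sums these conserved quantities. You instead sum the third group of equations in \eqref{eq:discreteEL1} directly (equivalently, test the constraint with $w_h=1$) and handle the moving domain by Reynolds' transport theorem together with the endpoint vanishing of $\rho_h\in V_{h,0}^t$, which lets you bypass the advection identity altogether. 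Both arguments are equally short; yours is marginally more self-contained, since it does not lean on the proof of the preceding proposition, while the paper's version yields the stronger local statements $\frac{d}{dt}\int_{I(t)}\rho_h\phi_i\,dx=0$ for the boundary indices $i=0$ and $i=N$ as a by-product.
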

\begin{proof}
Similar to the analysis in the proof in Proposition~\ref{prop:MassConserv0}, we have
	\begin{equation*}
		\frac{d}{dt}\int_{I(t)} \rho_h(x,t) \phi_i(x,t) dx = 0,\quad i=0,...,N.
	\end{equation*}
	Noticing that $\sum_{i=0}^{N} \phi_i=1$,
the equation~\eqref{eq:mass_cons1} is obtained by adding all the above equations together.
\end{proof}

We can also develop an explicit linear scheme and an implicit nonlinear scheme by time discretization. However, the explicit scheme is not decoupled in this case. The two schemes in algebraic forms are respectively given by,
\begin{align}
	& \widehat{\b{M}}^n \widehat{\b{\lambda}}^{n+1} =  \frac{\partial \mathcal{E}_h^n}{\partial \b{\rho}^n} , \label{matrix_scheme3_2}\\
	& \b{D}^n \b{v}^{n+1}  - (\widehat{\b{B}}^n - \widehat{\b{E}}^n)^{T} \b{\lambda}^{n+1} = - \frac{\partial \mathcal{E}_h^n}{\partial \b{x}^n}, \label{matrix_scheme3_3}\\
& (\widehat{\b{M}}^n)^T \bar{\partial}\b{\rho}^n +(\widehat{\b{B}}^n-\widehat{\b{E}}^n)\b{v}^{n+1} = 0;\label{matrix_scheme3_5}
\end{align}
and 
\begin{align}
	& \widehat{\b{M}}^n \widehat{\b{\lambda}}^{n+1} - \frac{\partial \mathcal{E}_h^{n+1}}{\partial \b{\rho}^{n+1}}  =0 , \label{matrix_scheme4_2}\\
	&\b{D}^n \b{v}^{n+1}   - (\widehat{\b{B}}^n - \widehat{\b{E}}^n)^T \widehat{\b{\lambda}}^{n+1}= - \frac{\partial \mathcal{E}_h^n}{\partial \b{x}^n}, \label{matrix_scheme4_3}\\
&(\widehat{\b{M}}^n)^T \bar{\partial}\b{\rho}^n+ (\widehat{\b{B}}^n - \widehat{\b{E}}^n)\b{v}^{n+1} =0.\label{matrix_scheme4_5}
\end{align}

%
%
Although the semi-discrete scheme~\eqref{eq:discreteEL1} satisfies the mass conservation property, the fully discrete schemes do not satisfy the property since they are linearized schemes. Our numerical experiments show that the numerical scheme \eqref{matrix_scheme3_2}-\eqref{matrix_scheme3_5}(or \eqref{matrix_scheme4_2}-\eqref{matrix_scheme4_5})) gives almost the same results as those in the previous section. Therefore, we will use the simpler schemes \eqref{matrix_scheme1_2}-\eqref{matrix_scheme1_4} and \eqref{matrix_scheme2_2}-\eqref{matrix_scheme2_4} in the  numerical examples  next section.

\section{Numerical examples}
In this section we present some numerical results to  show the effectivity of our numerical methods. We  consider both one dimensional and two dimensional problems.

We choose the Barenblatt-Pattle solution \cite{zel1950towards,barenblatt1952some} to test the accuracy of our method. The Barenblatt-Pattle solution is a special solution for the PME in $\mathbb{R}^d$. Let $\mathbf{x}$ be the coordinate of a point in $\mathbb{R}^d$. This solution has an explicit form
\begin{equation}
	B(\mathbf{x},t) = t^{-\alpha} (C - k|\mathbf{x}|^2 t^{-2\beta})_{+}^{\frac{1}{m-1}},
\end{equation}
where $(s)_{+} = \max \{s,0\}$,
\begin{equation*}
	\alpha = \frac{d}{d(m-1)+2},\quad \beta = \frac{\alpha}{d}, \quad k = \frac{\alpha (m-1)}{2md},
\end{equation*}
and $d$ is the number of dimension, $C>0$ is a constant  determined by the total mass. This solution has a compact support in space for any fixed time $t$. The free boundary is the surface given by the equation
\begin{equation}
	t = c|\mathbf{x}|^{d(m-1)+2},
\end{equation}
where $c= (\frac{k}{C})^{\frac{d(m-1)+2}{2}}$. The boundary changes position when $t$ increases.

The $L^2$ error between the discrete solution $\rho_h$ and the exact solution $\rho$ at time $T$ is computed by
\begin{equation}
	err_{L^2} := \Big(\int_{\Omega} (\rho(\mathbf{x},T)-\rho_h(\mathbf{x},T))^2 d\mathbf{x} \Big)^{1/2}.
\end{equation}
\subsection{One-dimensional problems}
 Numerical experiments show that both the explicit and implicit schemes work well when the time step is small. In the one-dimensional case, we employ the implicit scheme \eqref{scheme2_2}-\eqref{scheme2_4} to solve the PME.  We are mainly interested in the adaptive motion of the mesh and how it affects the numerical errors.
\subsubsection{Convergence tests}
	We first consider the Barenblatt-Pattle solution, with constants $C = 1$ and $d = 1$. We take the Barenblatt-Pattle solution at $t=1$, denoted as $B(x,1)$, as the initial data. We compare the numerical solution with the exact solution $B(x,T)$ at time $T=2$, for both $m=2$ and $m=5$ cases. To investigate the accuracy of the method in the one-dimensional case, we use a uniform mesh and a least squares best fit mesh  \cite{baines1994algorithms} for the initial data, respectively. In both cases, the time-step is reduced by a factor of 4 each time the mesh is refined.
	
	Figure~\ref{fig:1ab} shows the convergence behavior of our method in various situations. 
	From Figure \ref{fig1}, we observe that when $m=2$, the second-order convergence rate can be obtained on a uniform initial mesh. However, when $m=5$, the uniform initial mesh leads to a slower convergence rate. This is due to the fact that larger $m$ corresponds to a more singular solution of the PME. To cure the discrepancy of the convergence rate, we can choose a better initial
	mesh by finding a best approximation to the initial function in a piecewisely linear finite element space with free knots. We do this by
	a least square method \cite{baines1994algorithms}. The numerical results for the nonuniform initial meshes are shown in Figure~\ref{fig2}. We see that the optimal
	convergence rate is obtained for both   $m=2$ and $m=5$ cases. 
	

	\begin{figure}[ht!]
		\centering
		\subfigure[uniform initial mesh]{
			\begin{minipage}[ht]{0.5\linewidth}
				\centering
				\includegraphics[width=1.05\textwidth]{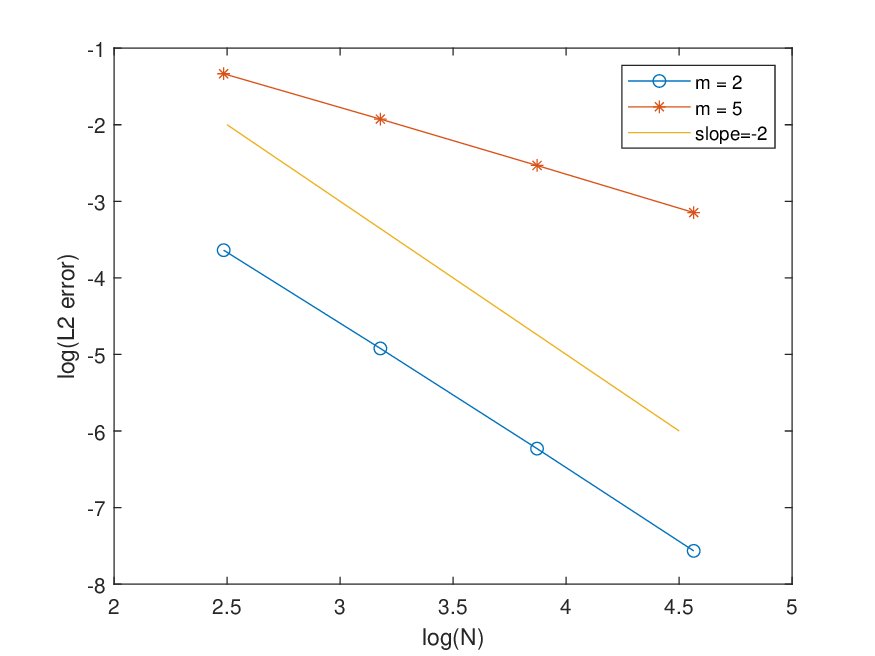}
				\label{fig1}
			\end{minipage}%
		}%
		\subfigure[non-uniform initial mesh]{
			\begin{minipage}[ht]{0.5\linewidth}
				\centering
				\includegraphics[width= 1.05\textwidth]{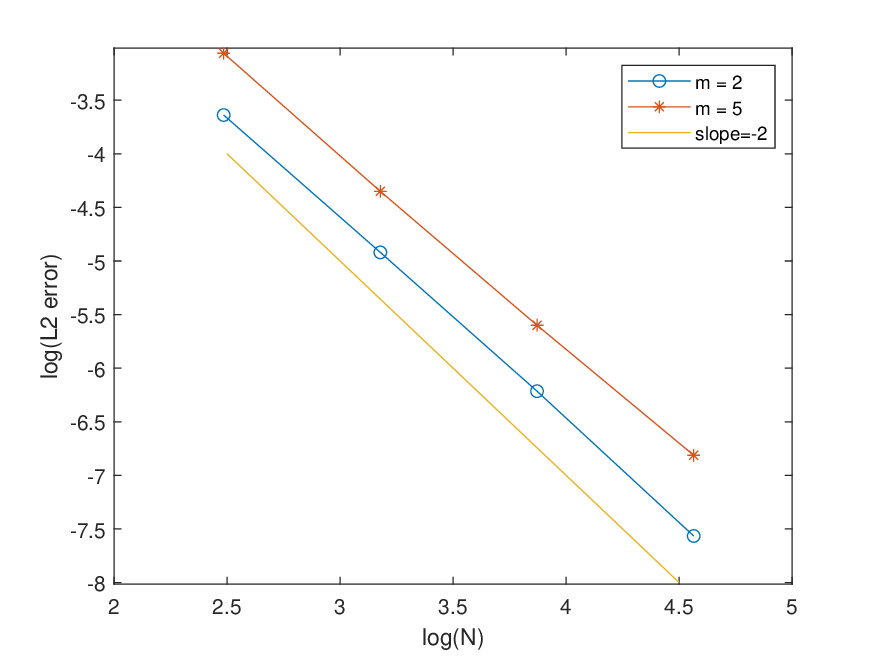}
				\label{fig2}
			\end{minipage}
		}%
		\caption{Convergence of the numerical solutions for the PME ($m = 2, 5$) at $T = 2$.}	\label{fig:1ab}
	\end{figure}  

	Figure \ref{fig3} exhibits the numerical and exact  (Barenblatt-Pattle) solutions at $T=2$ for $m=5$ using different meshes. 
	We see that the numerical solutions fit well with the exact solution even for a very coarse mesh. This implies that the boundary
	points moves correctly when time evolves.
	\begin{figure}[ht!]
		\centering
		\subfigure[N=12]{
			\begin{minipage}[h]{0.45\linewidth}
				\centering
				\includegraphics[width=0.9\textwidth]{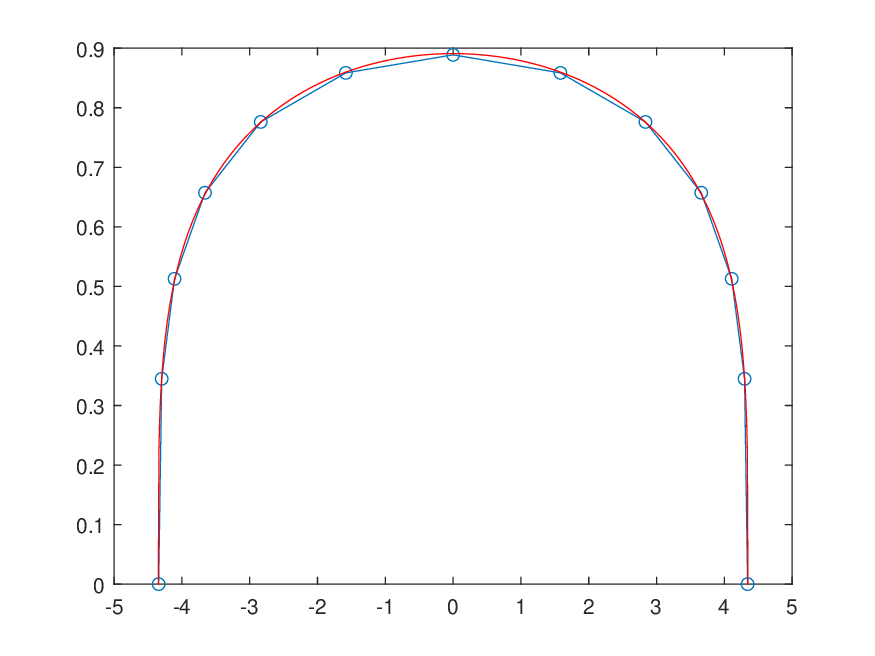}
			\end{minipage}%
		}%
		\subfigure[N=24]{
			\begin{minipage}[h]{0.45\linewidth}
				\centering
				\includegraphics[width= 0.9\textwidth]{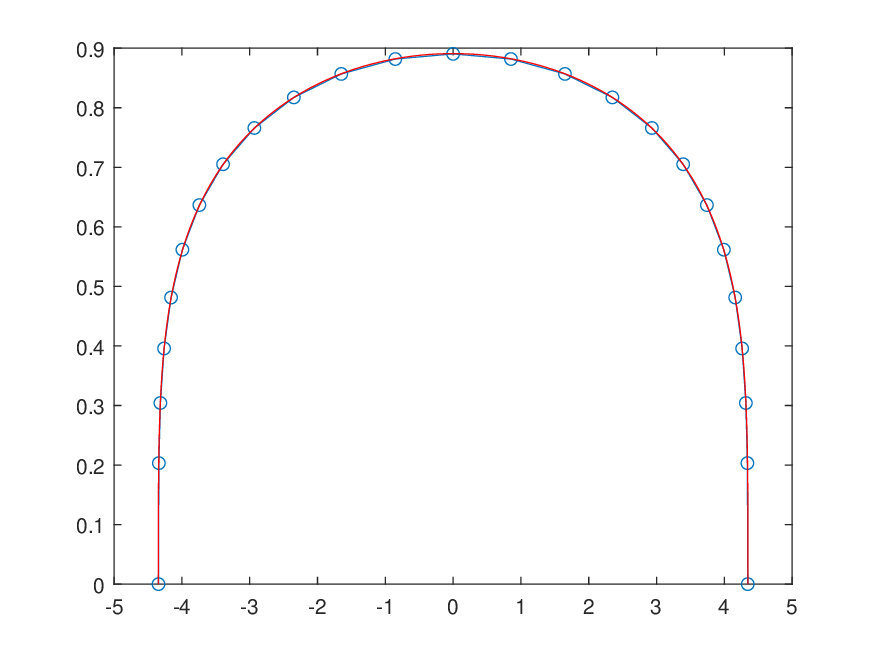}
			\end{minipage}
		}%
		\qquad
		\subfigure[N=48]{
			\begin{minipage}[h]{0.45\linewidth}
				\centering
				\includegraphics[width=0.9\textwidth]{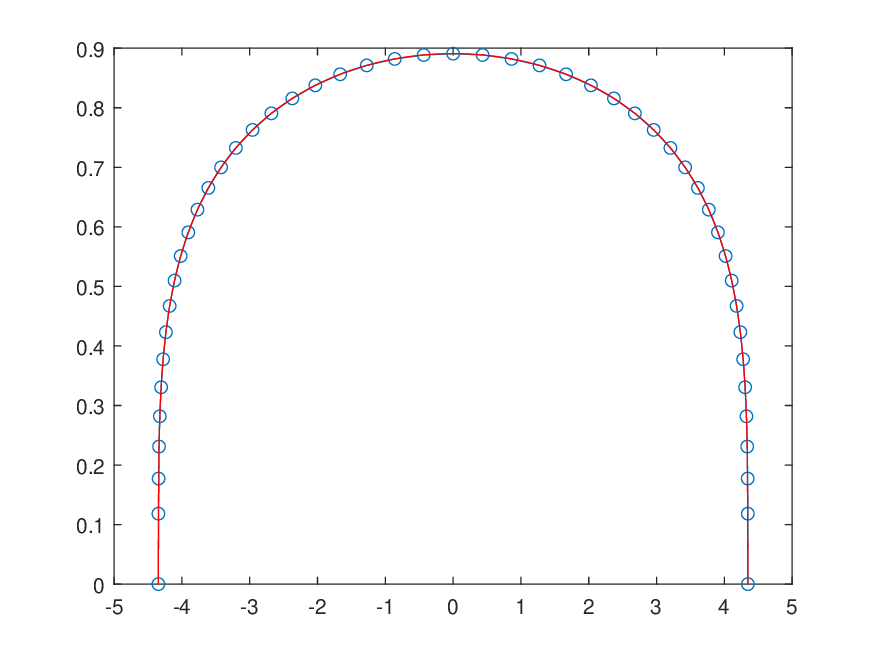}
			\end{minipage}%
		}%
		\subfigure[N=96]{
			\begin{minipage}[h]{0.45\linewidth}
				\centering
				\includegraphics[width= 0.9\textwidth]{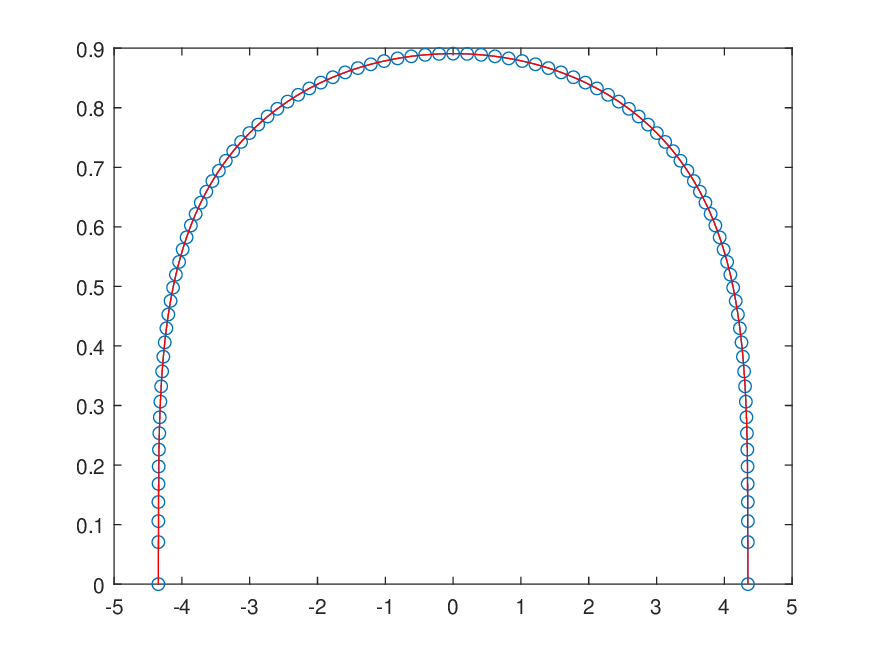}
			\end{minipage}
		}
		\caption{The exact solutions (red solid line) and the numerical solutions (blue circles) of the PME($m=5$).}
		\label{fig3}
	\end{figure}
	
	As stated in Section 3, the numerical scheme \eqref{eq:discreteEL} does not preserve the total mass exactly. 
	Nevertheless,  numerical experiments show that the  errors for the mass are usually  small. Some typical results 
	are listed in Table~\ref{table:Mass}. 
	In the test,	we choose $C = \frac{3^{1/3}}{4}$ so that the mass $\int_{I} B(x,1)dx = 1$ for $m=2$. We see that the errors for the total mass are very small and decay  with an optimal convergence
	rate.

	\begin{table*}
		\begin{center}
			\caption{Convergence of the mass errors at $T=2$}
			\label{table:Mass}
			\begin{tabular}{c|c|c}
				\hline
				Number of mesh points & Error of numerical mass & Order \\
				\hline
				12  & $5.5471\times 10^{-4}$ &       \\
				24 &            $1.3884 \times 10^{-4}$                               & 1.9983       \\
				48 &       $3.4719 \times 10^{-5}$                                    &      1.9996 \\
				96 &            $8.6804 \times 10^{-6}$                & 1.9999 \\
				\hline
			\end{tabular}
		\end{center}
	\end{table*}

\subsubsection{Waiting time phenomenon}
It is known that  the solution of the PME may exhibit a waiting time phenomenon.  Namely  the support of the solution may not change until the time $t$ is larger than some critical value $t^*$. To test if our method can capture this phenomenon, we consider an
 initial value as in \cite{aronson1983initially},
	\begin{equation}
		\rho_0(x) = \begin{cases}
			\Big(\frac{m-1}{m}((1-\theta)\sin^2(x) + \theta \sin ^4(x))\Big)^{\frac{1}{m-1}},\quad x\in [-\pi,0],\\
			0, \quad \text{ otherwise},
		\end{cases}
		\label{waiting_time}
	\end{equation}
	with $\theta \in [0,1]$. For such an initial value,  the critical waiting time is given by an explicit formula  $
	t^*=\frac{1}{2(m+1)(1-\theta)}$ when $\theta \in [0,\frac{1}{4}]$.
	
	In our tests, we set $\theta = 0$ and $m = 4$. This gives a waiting time $t^*=0.1$. The numerical solutions at various time are shown in Figure~\ref{fig4}. We see that the shape of the solution changes while the support of the solution does not change until $t\geq 0.1$.
To show the motion of the boundary points more clearly, we plot the coordinates of the left and right boundaries with respect time in
Figure~\ref{fig:waiting time point}. We can see that the numerical method can capture the waiting time phenomenon automatically.

	\begin{figure}[ht!]
		\centering
		\subfigure[t=0]{
			\begin{minipage}[t]{0.45\linewidth}
				\includegraphics[width=0.95\textwidth]{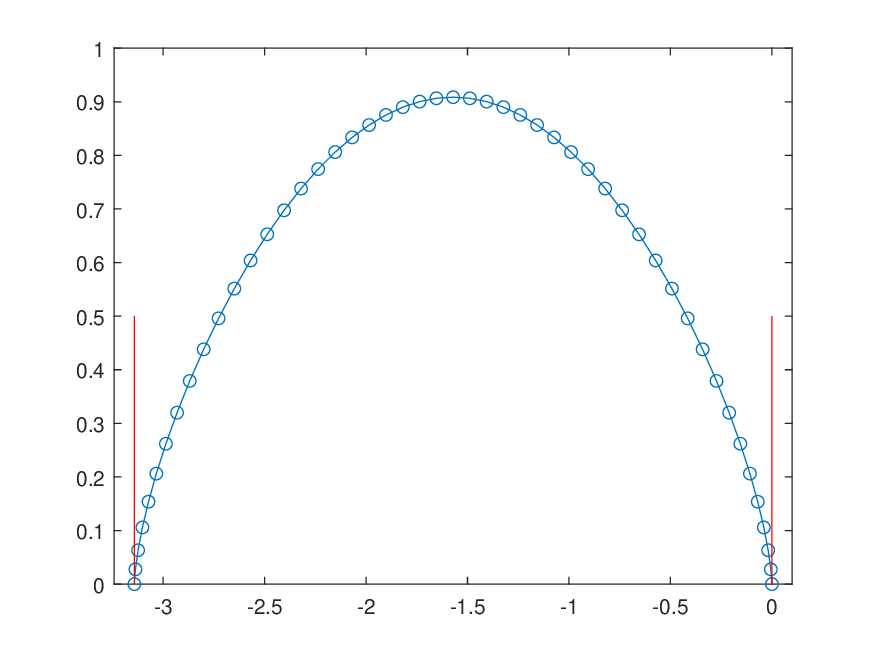}
			\end{minipage}%
		}%
		\subfigure[t=0.1]{
			\begin{minipage}[t]{0.45\linewidth}
				\includegraphics[width=0.95\textwidth]{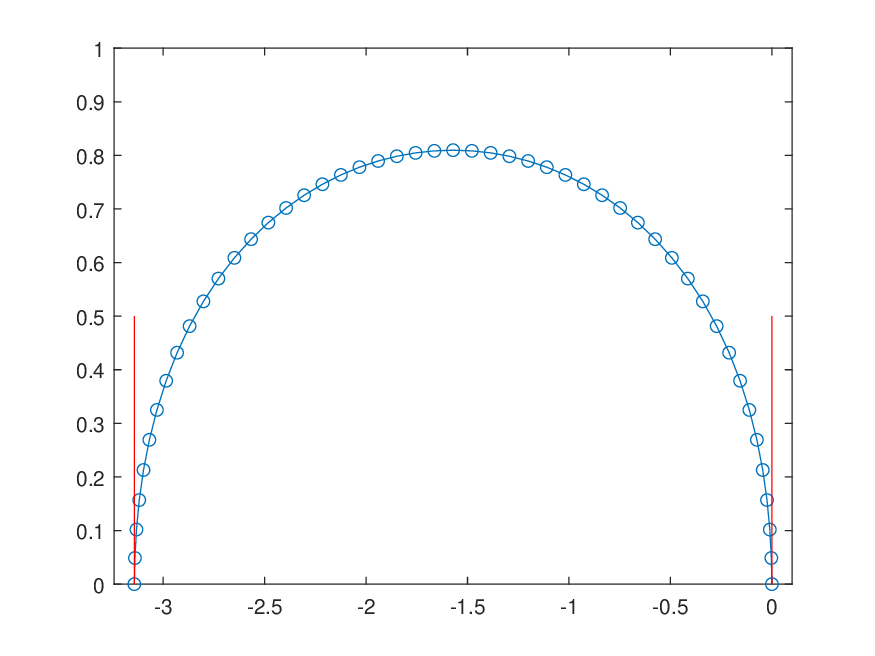}
			\end{minipage}%
		}%
		\\
		\subfigure[t=0.15]{
			\begin{minipage}[ht]{0.45\linewidth}
				\includegraphics[width= 0.95\textwidth]{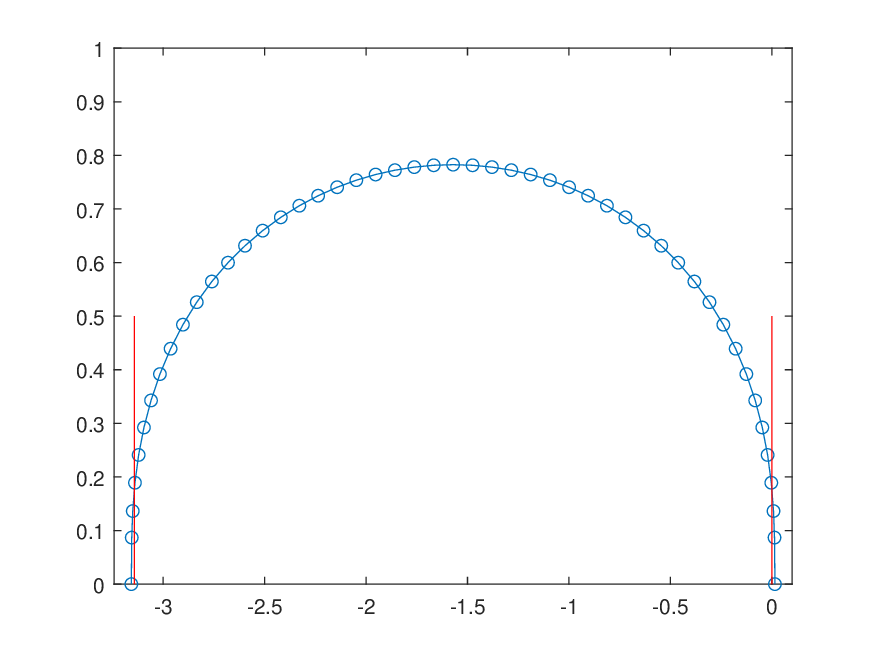}
			\end{minipage}
		}
		\subfigure[t=0.2]{
			\begin{minipage}[ht]{0.45\linewidth}
				\includegraphics[width= 0.95\textwidth]{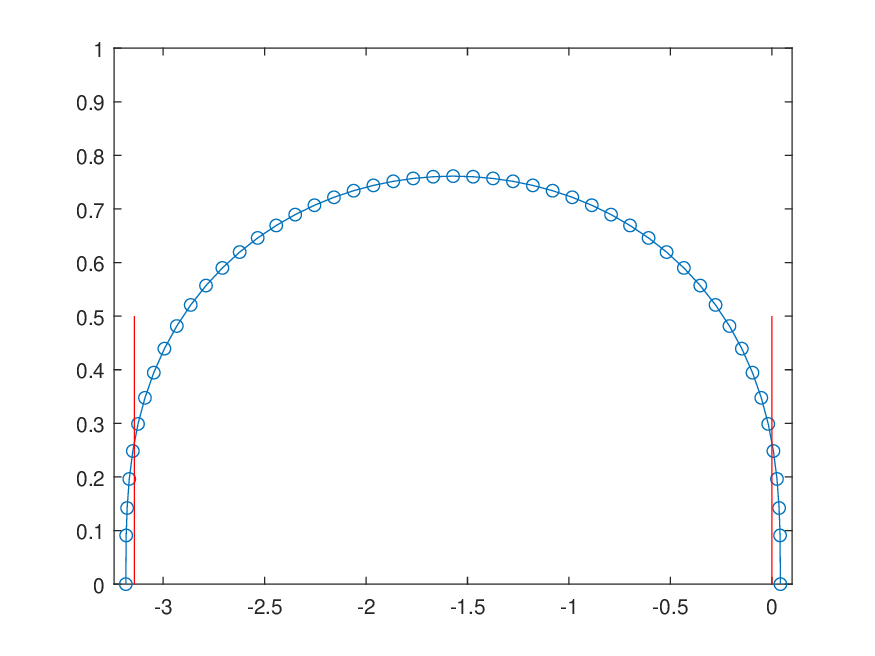}
			\end{minipage}
		}
		\caption{Numerical solutions of the PME($m=4$) with the initial value \eqref{waiting_time} with $\theta = 0$. Here $N = 48,  \tau = 2.5\times 10^{-4}$.}
		\label{fig4}
	\end{figure}

	\begin{figure}[ht!]
		\centering
		\subfigure[Left boundary]{
			\begin{minipage}[t]{0.5\linewidth}
				\centering
				\includegraphics[width=1.05\textwidth]{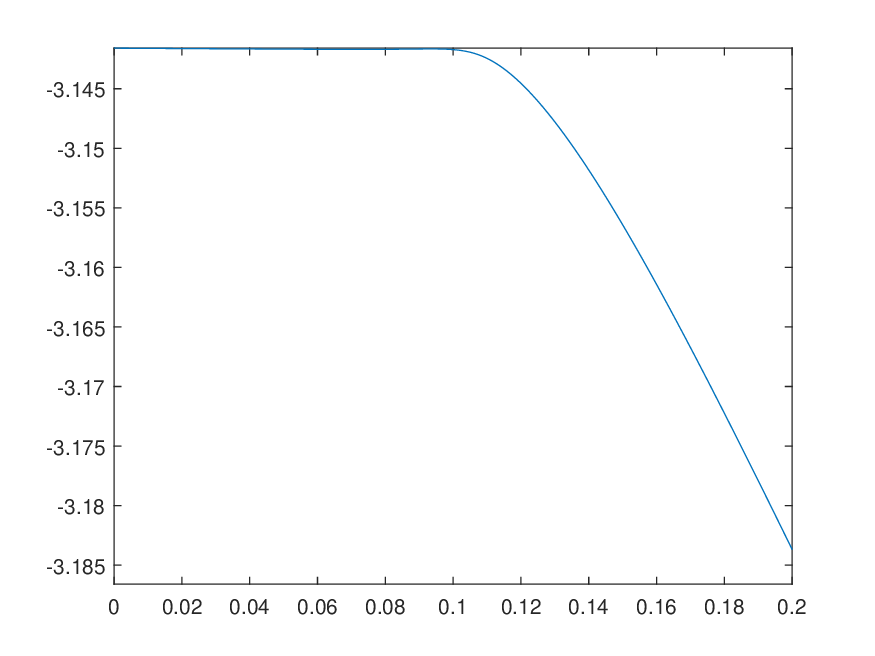}
			\end{minipage}%
		}%
		\subfigure[Right boundary]{
			\begin{minipage}[t]{0.5\linewidth}
				\centering
				\includegraphics[width= 1.05\textwidth]{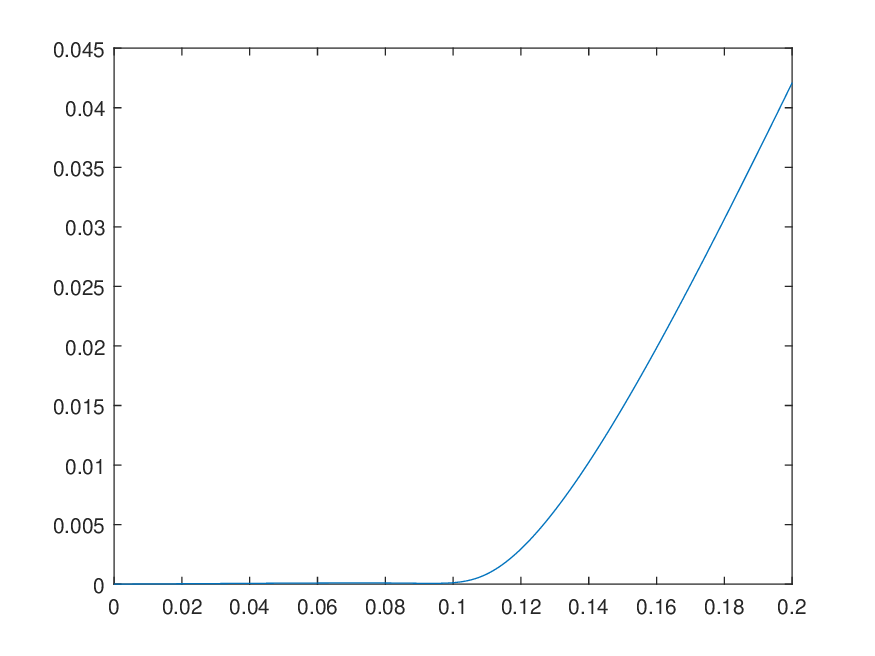}
			\end{minipage}
		}%
		\caption{Boundary motion with respect to time.}
		\label{fig:waiting time point}
	\end{figure}

\subsection{Two-dimensional problems}
We apply our numerical method to the PME in two dimensions. The explicit formulae of the numerical 
schemes are given in the appendix. In the numerical tests, we use the explicit numerical scheme for simplicity.
In 2D, the convergence rate $p$ is calculated by 
\begin{equation}
p=	\frac{\log(err_1/err_2)}{\log(\sqrt{N_2/N_1})},
\end{equation}
where $err_1$ and $err_2$ are  the $L^2$ errors  for the numerical solutions calculated on meshes with  $N_1$ and $N_2$ vertexes, respectively.

\subsubsection{Convergence test}
	We consider the  two dimensional Barenblatt-Pattle solution $B(x,y,t)$ with $C = 0.1$ and $d = 2$, where $(x,y)$ is 
	the coordinate of a point in $\mathbb{R}^2$.  We set $B(x,y,1)$ as the initial data and test the convergence rate at $T=2$. The numerical results are shown in Figure~\ref{fig:mesherror}. 
	The numerical results are similar to that in one dimensional case. We see that the optimal convergence rate is obtained when 
 $m=2$ with a quasi-uniform initial mesh. For larger $m$, the uniform initial mesh will lead to a sub-optimal convergence rate. This is shown in Figure~\ref{fig:mesherror} for the $m=5$ case.  We also see that that the convergence rate is better on a  non-uniform initial mesh than that on a uniform one. We remark that it is not an easy task to find an optimal initial mesh in 2D as discussed in \cite{baines1994algorithms}.

\begin{figure}
	\centering
	\includegraphics[width=0.75\textwidth]{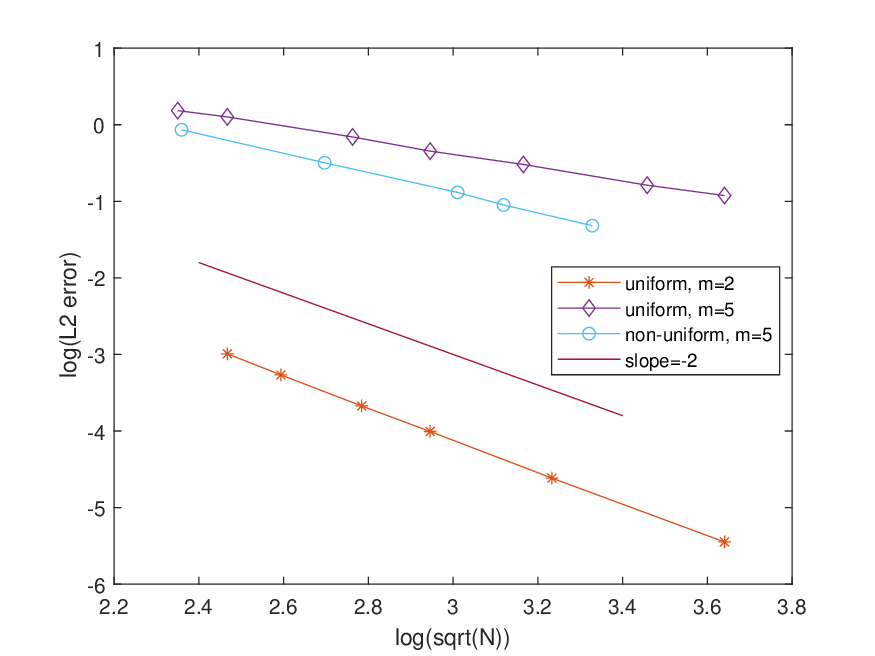}
	\caption{Convergence of the numerical solutions of the PME ($m = 2,5$)  at $T = 2$.}
	\label{fig:mesherror}
\end{figure}
 

We can also compute the waiting time phenomenon in the two dimensional case. For that purpose, we choose $m=2$ and
 the following initial function,
\begin{equation}
	\rho_0(x,y) = \begin{cases}
		\frac{1}{2}\sin^2(\sqrt{x^2+y^2}-\pi),\quad \hbox{if } \sqrt{x^2+y^2}\leq \pi, \\
		0,\qquad \qquad\qquad\qquad \qquad \hbox{otherwise}.
	\end{cases}
	\label{eq:waiting time 2D}
\end{equation}
According to the previous theoretical results \cite{vazquez2007porous}, there exists a positive waiting time for
such an initial value.  In the numerical test, the initial triangulation for $\Omega:=\{(x,y): \sqrt{x^2+y^2}<\pi\}$
is quasi-uniform with $1983$ cells. We set $\tau=10^{-3}$. 

Figure \ref{fig:waiting time} shows the numerical solutions of the PME for the initial data \eqref{eq:waiting time 2D} at various time. We  see that the numerical method can also  capture the waiting time phenomenon very well in the two dimensional case and
the waiting time is about $0.125$.
\begin{figure}[ht!]
	\centering
	\subfigure[t=0]{
		\begin{minipage}[t]{0.3\linewidth}
			\centering
			\includegraphics[width=0.95\textwidth]{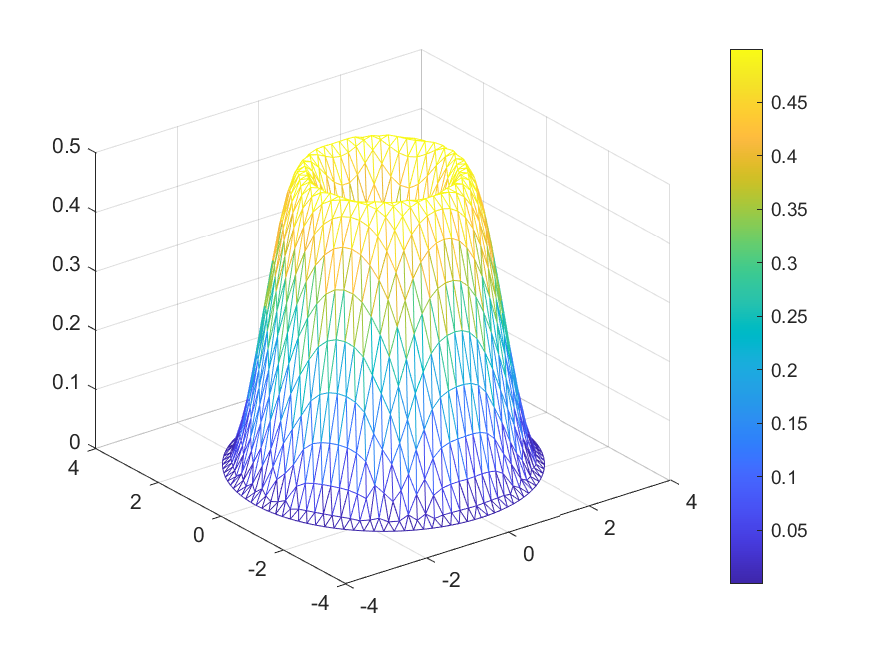}
		\end{minipage}%
	}%
	\subfigure[t=0]{
		\begin{minipage}[t]{0.3\linewidth}
			\centering
			\includegraphics[width= 0.95\textwidth]{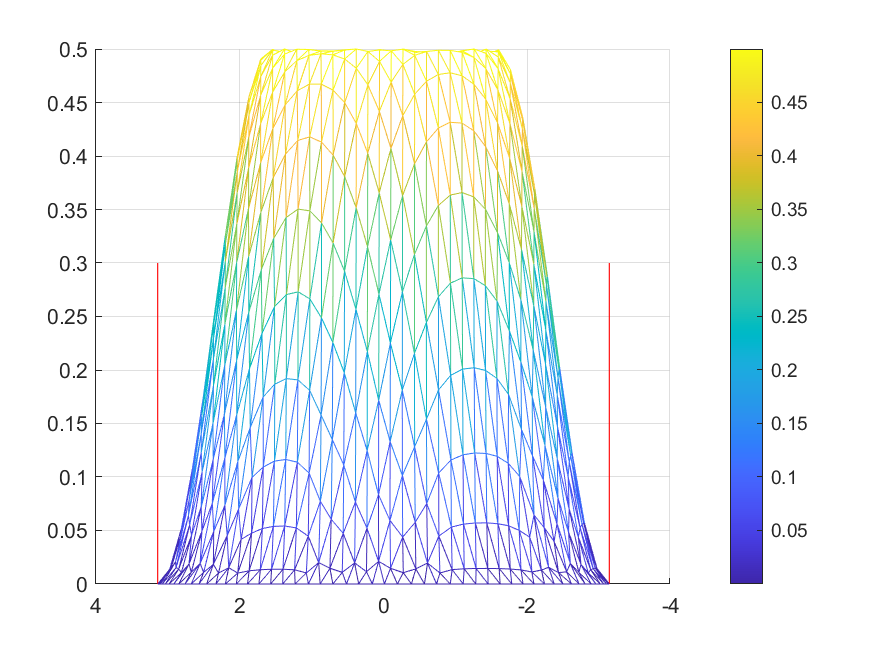}
		\end{minipage}
	}%
	\subfigure[t=0]{
		\begin{minipage}[t]{0.3\linewidth}
			\centering
			\includegraphics[width= 0.95\textwidth]{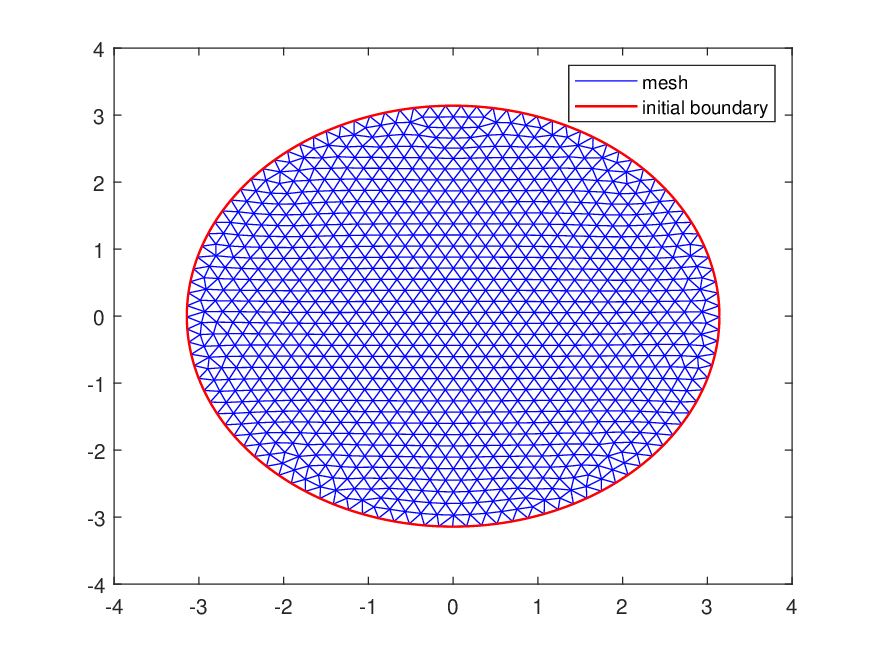}
		\end{minipage}
	}%
	\qquad
	\subfigure[t=0.125]{
		\begin{minipage}[t]{0.3\linewidth}
			\centering
			\includegraphics[width= 0.95\textwidth]{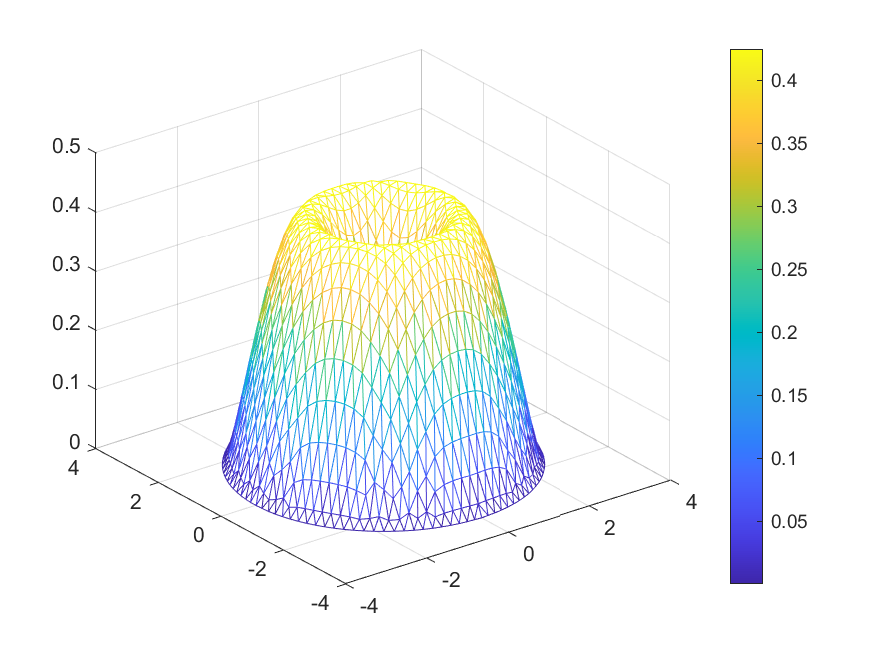}
		\end{minipage}
	}
	\subfigure[t=0.125]{
		\begin{minipage}[t]{0.3\linewidth}
			\centering
			\includegraphics[width= 0.95\textwidth]{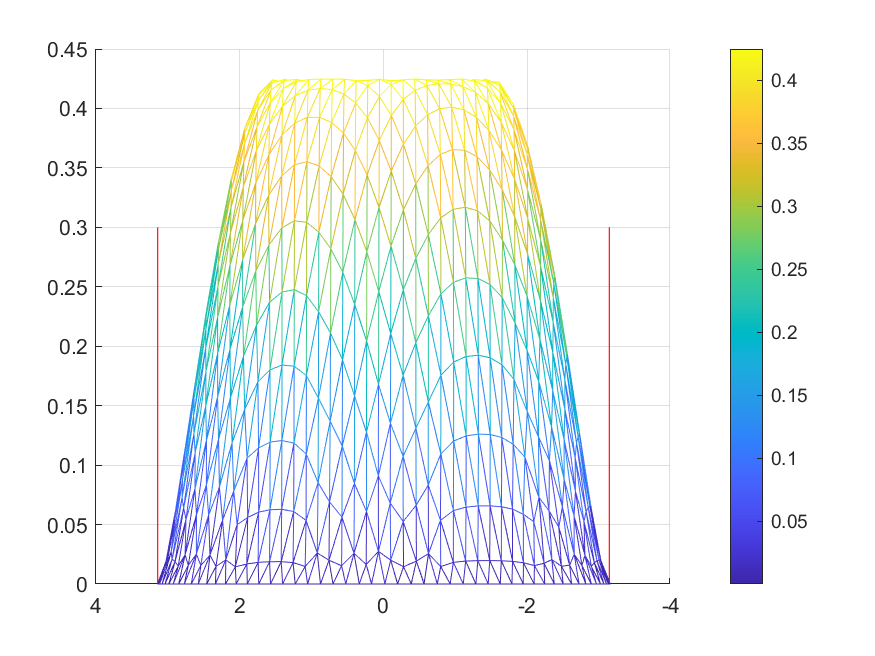}
		\end{minipage}
	}
	\subfigure[t=0.125]{
		\begin{minipage}[t]{0.3\linewidth}
			\centering
			\includegraphics[width= 0.95\textwidth]{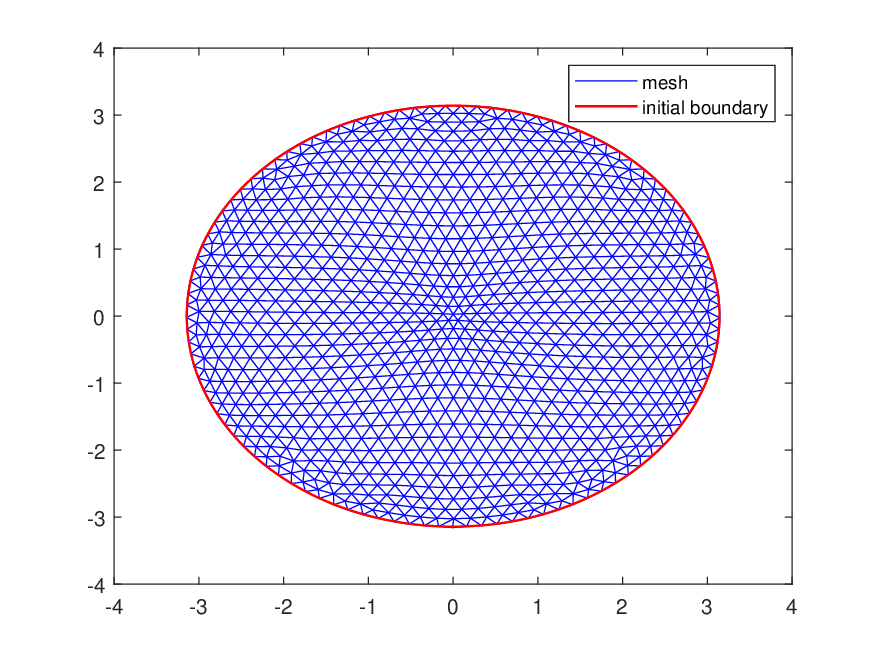}
		\end{minipage}
	}
	\subfigure[t=0.250]{
		\begin{minipage}[t]{0.3\linewidth}
			\centering
			\includegraphics[width= 0.95\textwidth]{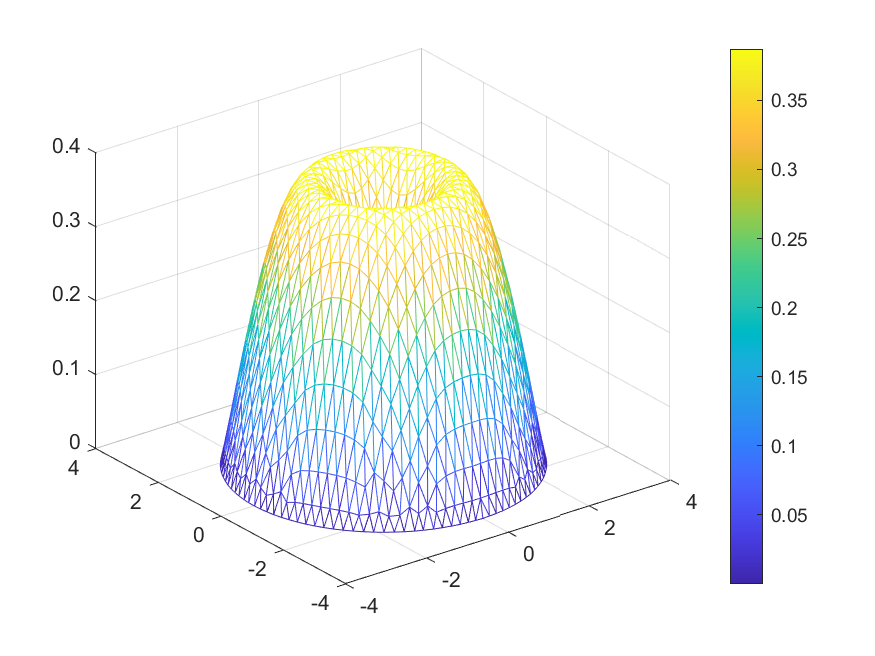}
		\end{minipage}
	}	\subfigure[t=0.250]{
		\begin{minipage}[t]{0.3\linewidth}
			\centering
			\includegraphics[width= 0.95\textwidth]{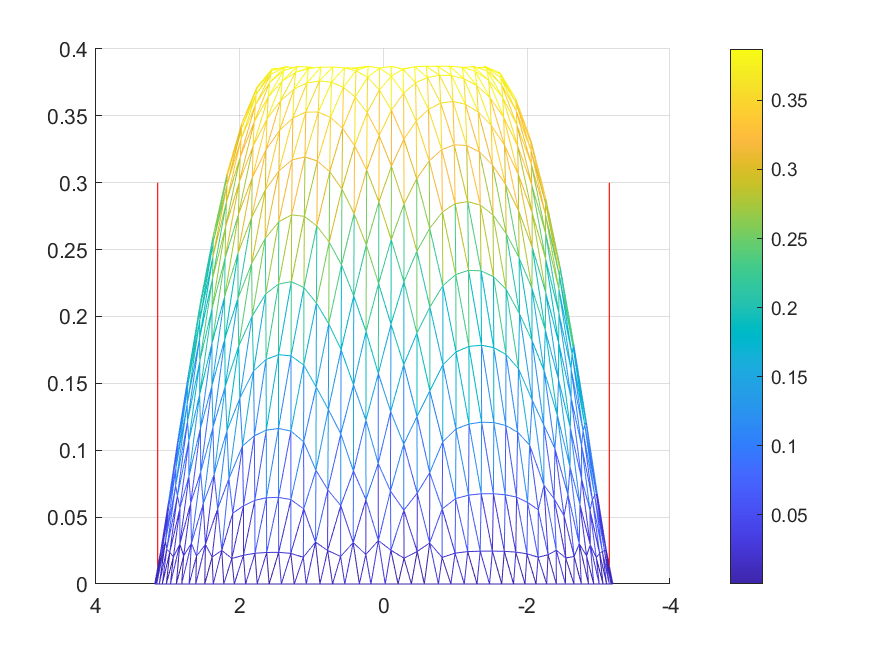}
		\end{minipage}
	}
	\subfigure[t=0.250]{
		\begin{minipage}[t]{0.3\linewidth}
			\centering
			\includegraphics[width= 0.95\textwidth]{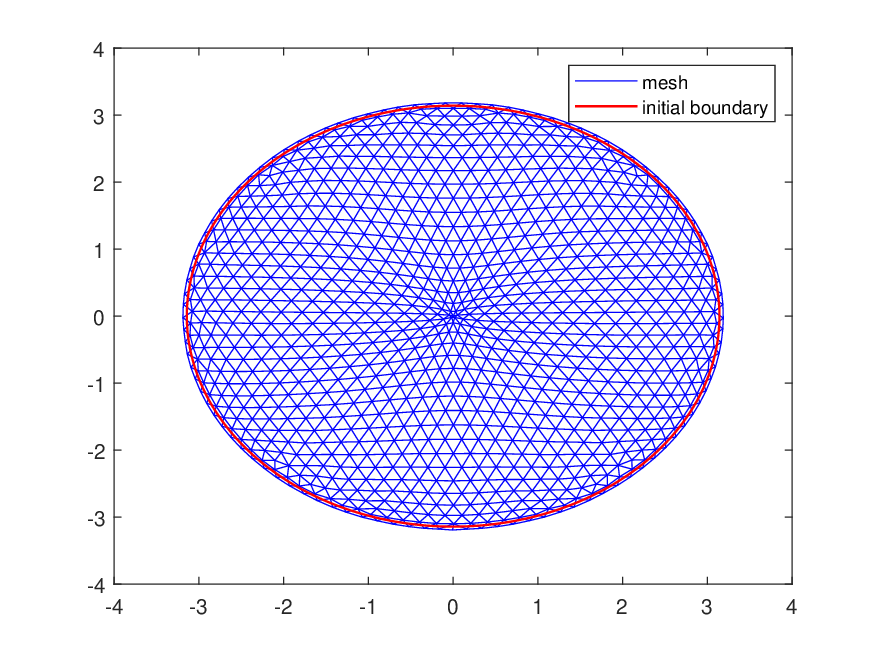}
		\end{minipage}
	}
	\caption{Numerical solutions of the PME($m=2$) with the initial value \eqref{eq:waiting time 2D}.}
	\label{fig:waiting time}
\end{figure}

\subsubsection{General examples}
Finally, we show some examples with more general initial values.
\begin{figure}[ht!]
	\centering
	\subfigure[t=0]{
		\begin{minipage}[t]{0.5\linewidth}
			\centering
			\includegraphics[width=0.9\textwidth]{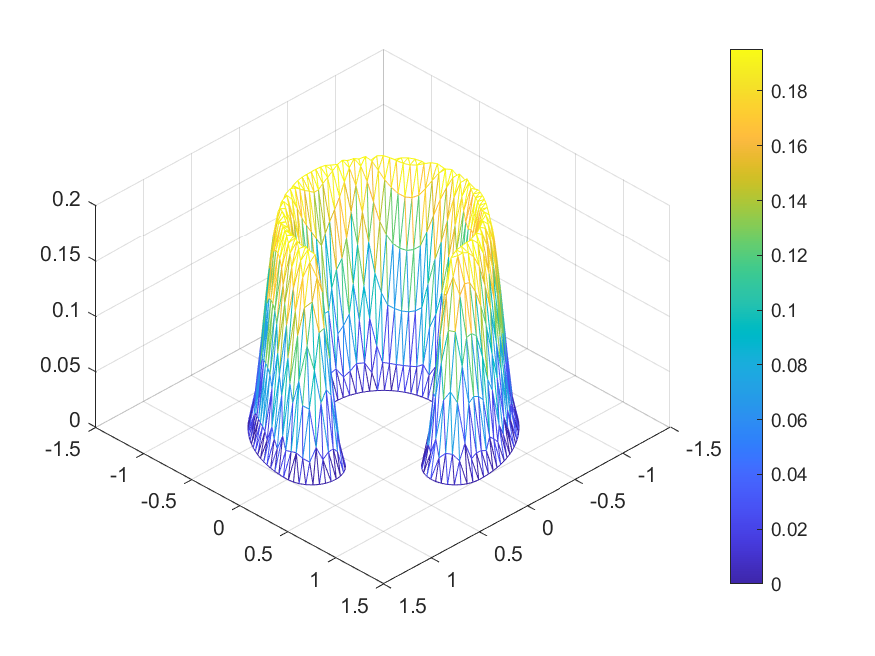}
		\end{minipage}%
	}%
	\subfigure[t=0]{
		\begin{minipage}[t]{0.5\linewidth}
			\centering
			\includegraphics[width= 0.9\textwidth]{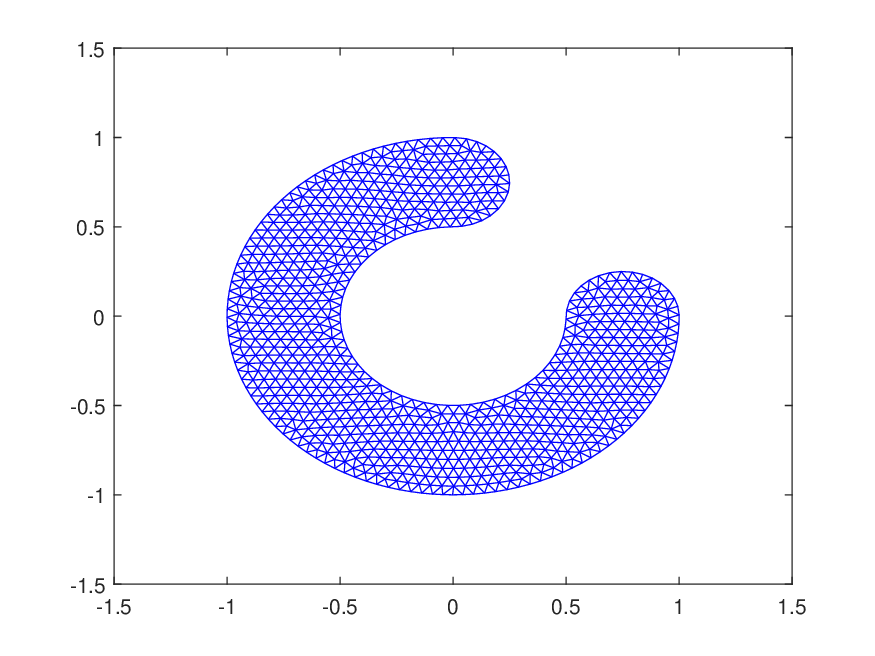}
		\end{minipage}
	}%
	\qquad
	\subfigure[t=0.1]{
		\begin{minipage}[t]{0.5\linewidth}
			\centering
			\includegraphics[width=0.9\textwidth]{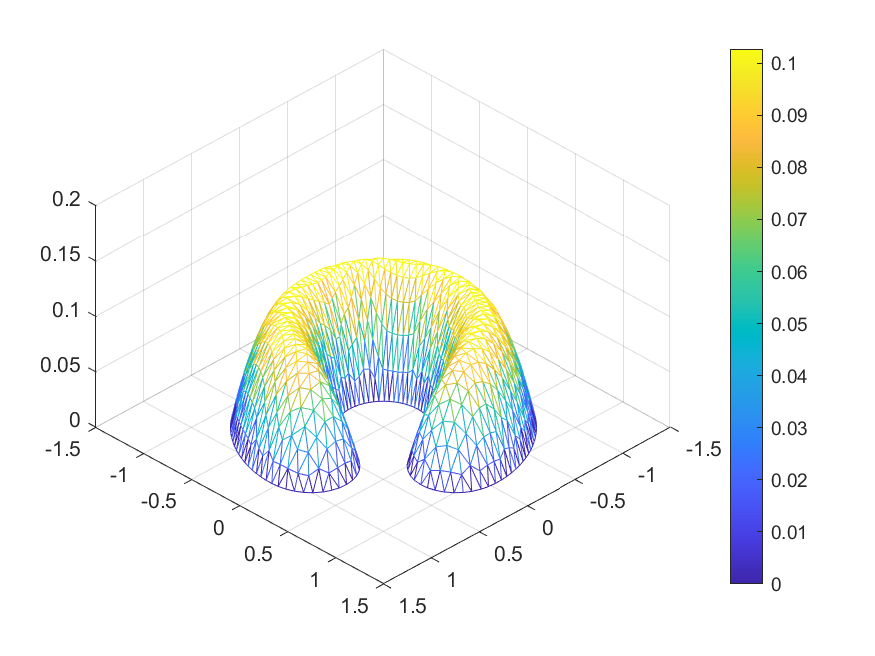}
		\end{minipage}%
	}%
	\subfigure[t=0.1]{
		\begin{minipage}[t]{0.5\linewidth}
			\centering
			\includegraphics[width= 0.9\textwidth]{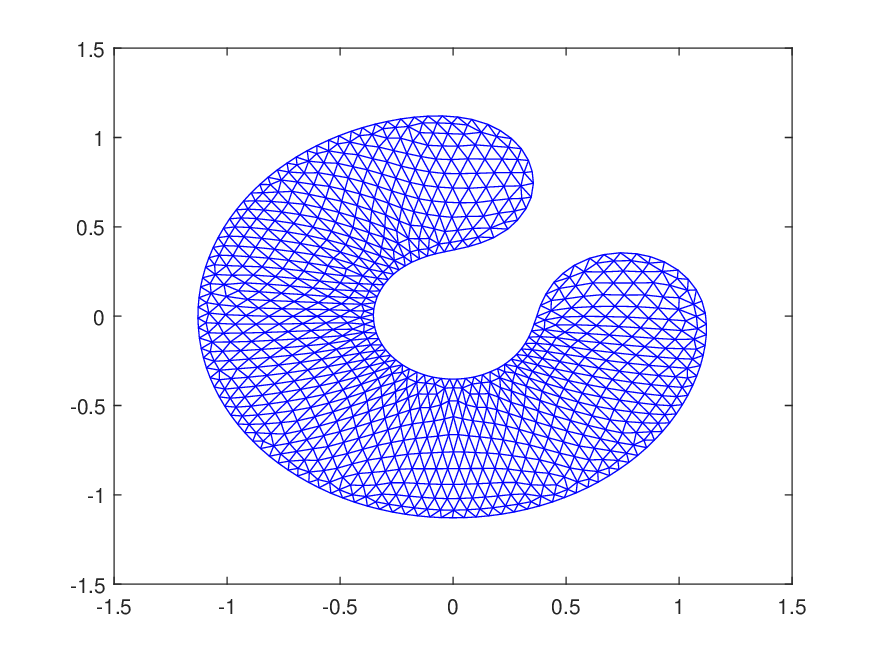}
		\end{minipage}
	}
	\qquad
	\subfigure[t=0.2]{
		\begin{minipage}[t]{0.5\linewidth}
			\centering
			\includegraphics[width=0.9\textwidth]{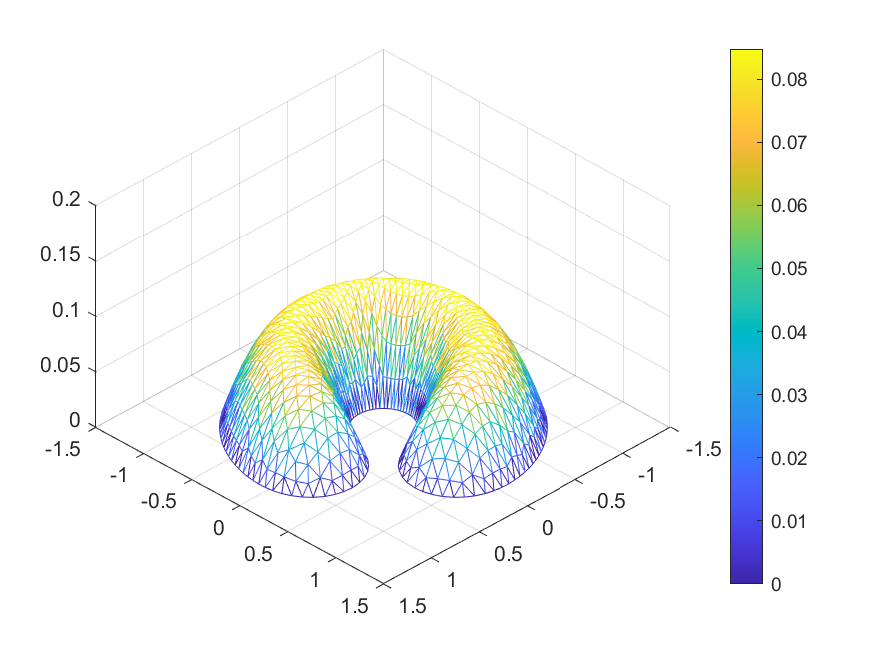}
		\end{minipage}%
	}%
	\subfigure[t=0.2]{
		\begin{minipage}[t]{0.5\linewidth}
			\centering
			\includegraphics[width= 0.9\textwidth]{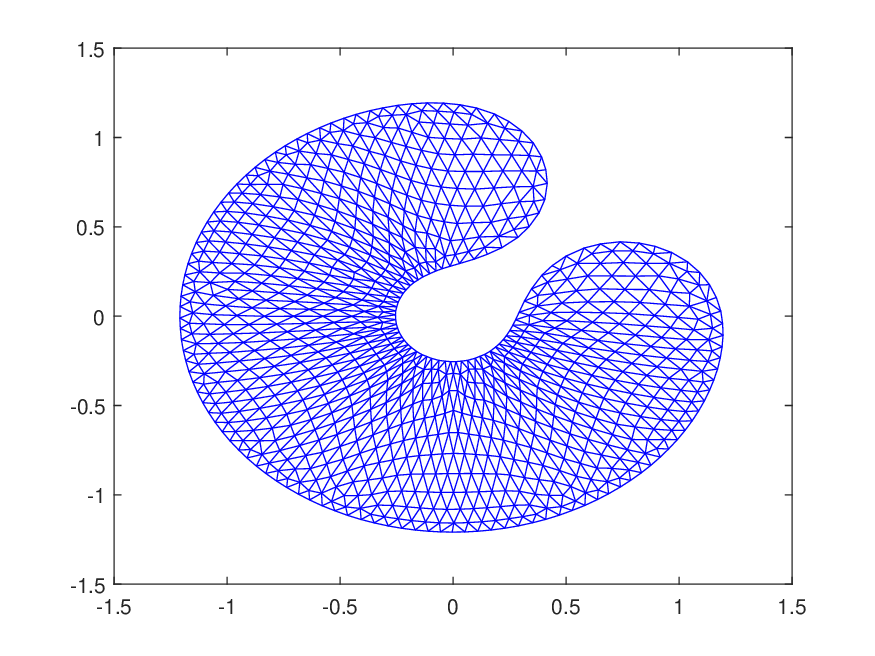}
		\end{minipage}
	}
	\caption{Numerical solutions of the  PME($m = 2$) with initial value  \eqref{eq:complex}. The initial mesh is  quasi-uniform and has  $910$ nodes, $1684$ cells. The time step is $\tau = 10^{-3}$.}
	\label{fig:complex}
\end{figure}
{
We first consider  an initial value with a compact support of ``horseshoe" shape, as  in \cite{baines2005moving,ngo2019adaptive,liu2020lagrangian}. In the test, we set $m=2$ and the initial function  is given by 
	\begin{equation}
		\rho_0(x,y) = \begin{cases}
			50(0.25^2-(\sqrt{x^2+y^2}-0.75)^2)^{2},\\
			\qquad\qquad\qquad \quad \hbox{if } \sqrt{x^2+y^2}\in (0.5,1) \text{ and } (x<0 \text{ or } y<0);\\
			50(0.25^2-x^2-(y-0.75)^2)^2,\\
			\qquad\qquad\qquad \quad \hbox{if } x^2+(y-0.75)^2\leq 0.25^2 \text{ and } x\geq 0;\\
			50(0.25^2-(x-0.75)^2-y^2)^2,\\
			\qquad\qquad\qquad \quad \hbox{if } (x-0.75)^2+y^2\leq 0.25^2 \text{ and } y\leq 0;\\
			0,	\qquad\qquad\qquad  \text{otherwise}.
		\end{cases}
		\label{eq:complex}
	\end{equation}
	Figure \ref{fig:complex} illustrates how the solution evolves with time. We see our method can solve the problem very well until the boundary of the support intersecting each other. However, the present numerical method cannot directly deal with the topology  change. 
	
	To deal with the topology change, a possible way is to consider a regularized problem where the PME is extended to a larger region and the initial value is set to be a small positive constant in the outer region.
	In the following, we show such an example with a solution with two peaks merging into one for the PME with $m=3$, motivated by the work in \cite{carrillo2018lagrangian,liu2020lagrangian}. Let $\Omega = [-1.5,1.5]^2$ and the initial data is given by
	\begin{equation}
		\rho_0(x,y) = e^{-20*((x-0.3)^2+(y-0.3)^2)} + e^{-20*((x+0.3)^2+(y+0.3)^2)} + 0.001,
		\label{eq:complex2}
	\end{equation} 
	which has two peak regions connected by a very thin layer with thickness $0.001$. The numerical results are shown in Figure \ref{fig:complex2}. It is clearly seen that the two separate peaks merges together gradually.
	\begin{figure}[ht!]
		\centering
		\subfigure[t=0.02]{
			\begin{minipage}[t]{0.5\linewidth}
				\centering
				\includegraphics[width=0.9\textwidth]{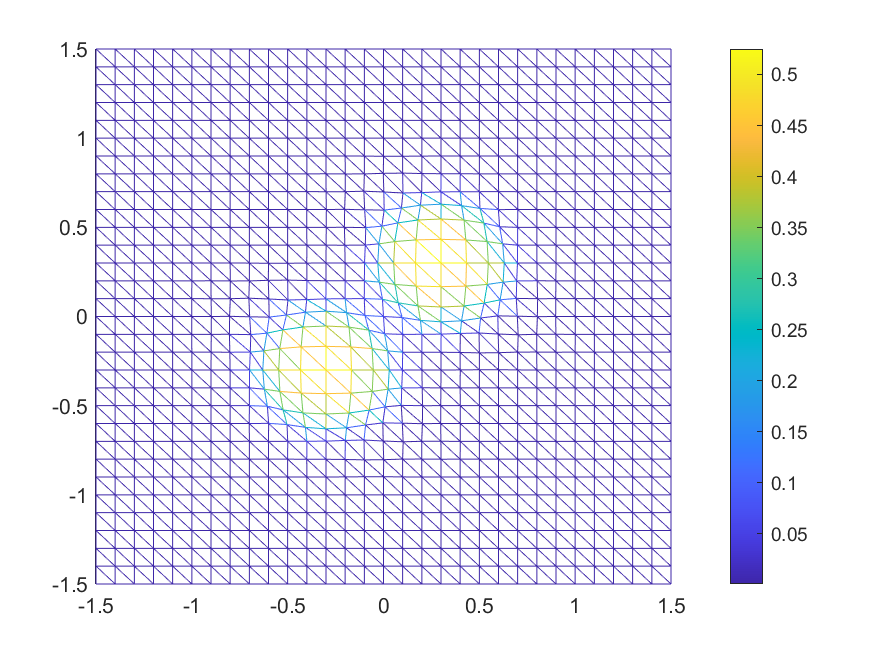}
			\end{minipage}%
		}%
		\subfigure[t=0.05]{
			\begin{minipage}[t]{0.5\linewidth}
				\centering
				\includegraphics[width= 0.9\textwidth]{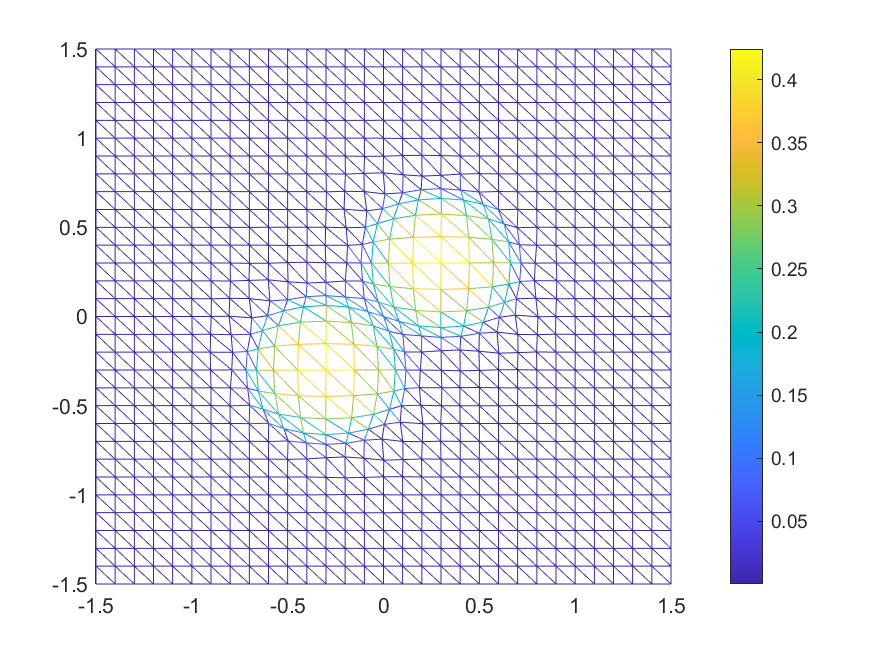}
			\end{minipage}
		}%
		\qquad
		\subfigure[t=0.1]{
			\begin{minipage}[t]{0.5\linewidth}
				\centering
				\includegraphics[width=0.9\textwidth]{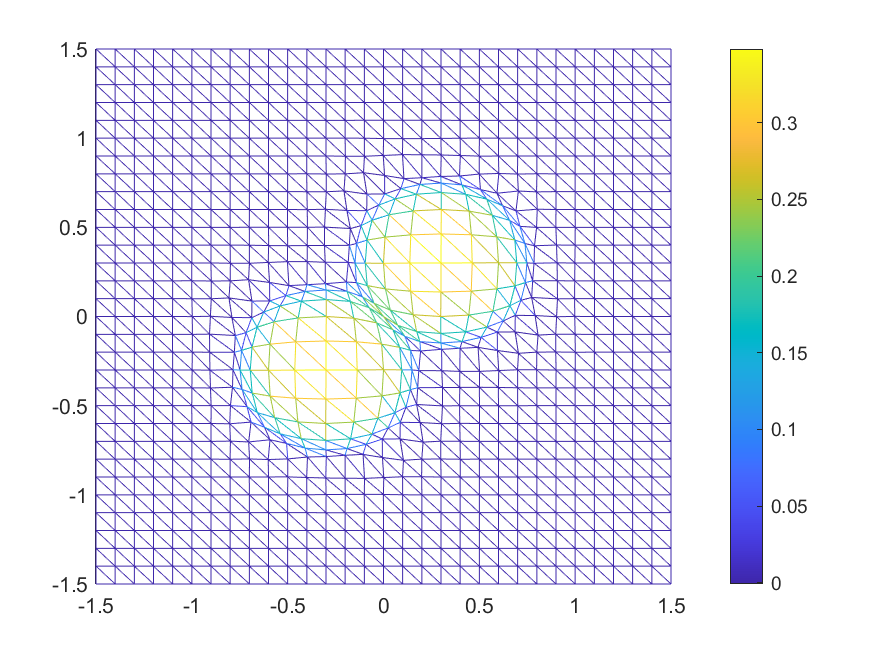}
			\end{minipage}%
		}%
		\subfigure[t=0.2]{
			\begin{minipage}[t]{0.5\linewidth}
				\centering
				\includegraphics[width= 0.9\textwidth]{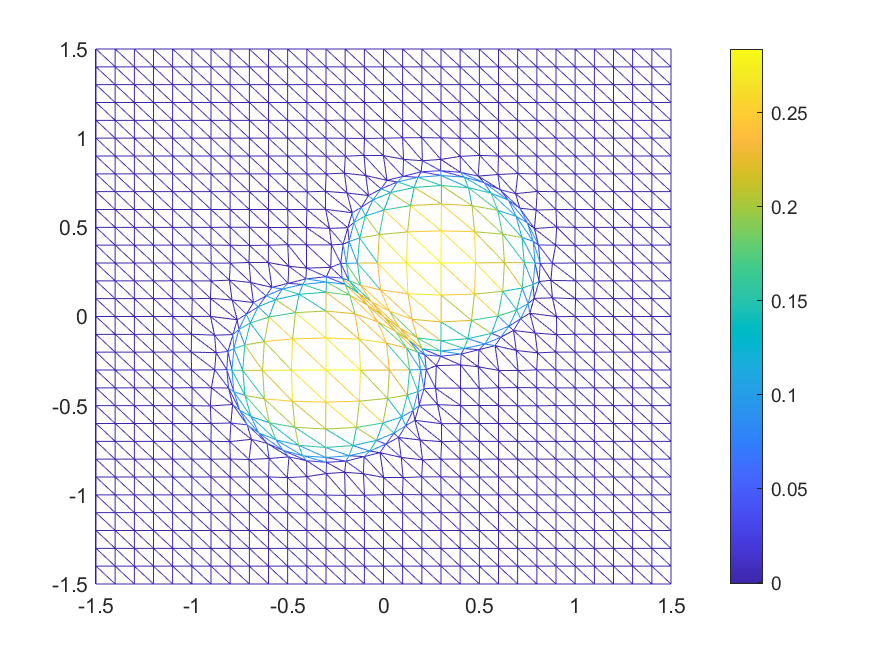}
			\end{minipage}
		}
		\qquad
		\subfigure[t=0.3]{
			\begin{minipage}[t]{0.5\linewidth}
				\centering
				\includegraphics[width=0.9\textwidth]{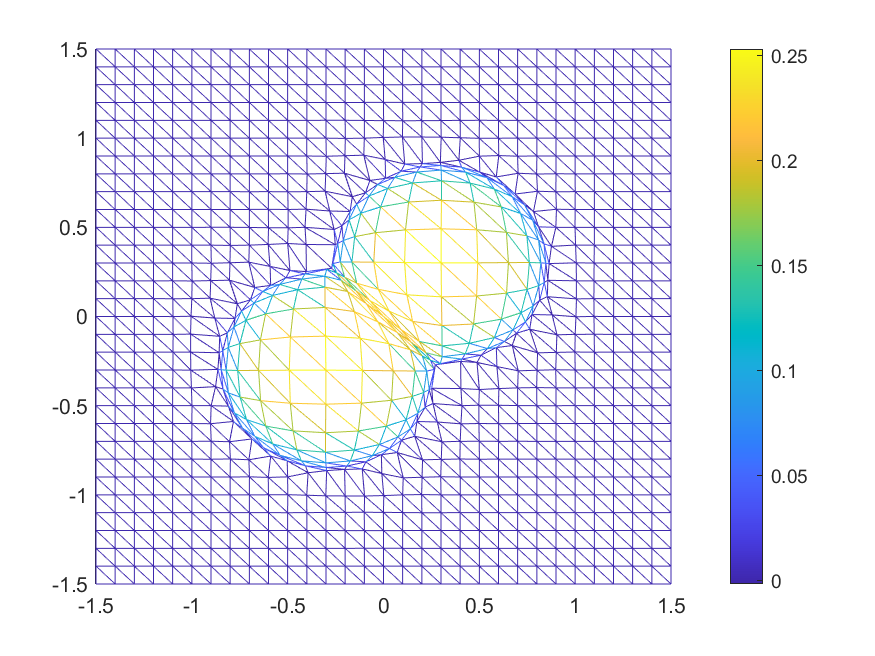}
			\end{minipage}%
		}%
		\subfigure[t=0.4]{
			\begin{minipage}[t]{0.5\linewidth}
				\centering
				\includegraphics[width= 0.9\textwidth]{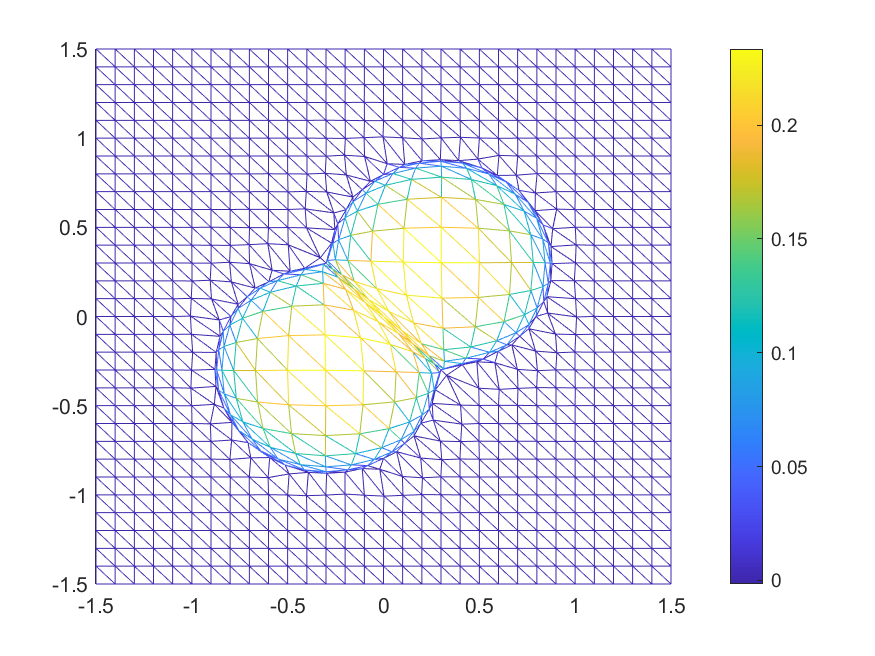}
			\end{minipage}
		}
		\caption{The numerical solutions for the  PME ($m = 3$) with initial value  \eqref{eq:complex2}. The initial mesh is uniform with 961 nodes, 1800 cells. The time step is $\tau = 10^{-3}$.}
		\label{fig:complex2}
	\end{figure}
	
}

\section{Conclusion}

In this paper, we utilize the Onsager principle to develop a new moving mesh method for the porous medium equation. We demonstrate that both the continuous PME and a semi-discrete scheme can be derived using this principle, ensuring that the scheme maintains the same energy dissipation structure as the continuous problem. Additionally, we introduce a fully discrete explicit decoupled scheme and an implicit scheme. Numerical examples illustrate the effectiveness of both schemes, showing that optimal convergence rates for the $L^2$
error can be achieved when the initial meshes are appropriately selected. The method naturally captures the waiting time phenomena and can be extended to higher-dimensional problems and higher-order approximations. It is important to note that while the derivation of the method is intuitive, the error estimate for the moving mesh method remains an open question. Optimal convergence estimates will require suitable assumptions regarding the initial meshes and the regularity properties of the system.

\appendix\renewcommand{\appendixname}{Appendix}
\section{Numerical scheme in two dimensions}
In this appendix, we will give the discrete numerical scheme in the two dimensional case.
 Let $\Omega(t) \subset \mathbb{R}^2$ be the domain where the PME is defined. 
 Denote by  $\mathcal{T}_h$  a partition of $\Omega$ with $N_K$ disjoint triangles $K$ such that the union of the triangles compose a polygonal domain $\Omega_h$. We suppose that the boundary vertices of $\Omega_h$ locate on $\partial \Omega$ initially. Let  $V_h^t$ be the finite element space with respect to the mesh $\mathcal{T}_h$,
\begin{equation}
V_h^t : = \{ u_h \in C(\bar{\Omega}_h)| u_h \text{ is linear in } K_i,i=1,...,N_K \}.
\end{equation}
 Let $N_{in}$ be the number of  vertices inside $\Omega_h$, and let $N$ be the  total number of the  vertexes in $\bar{\Omega}_h$. 
 Denote by $P_j=(x_j,y_j)$, ${j=1},\cdots, N$, a vertex of $\mathcal{T}_h$ which may change position with time $t$. Denote by $$V_{h,0}^t =\{u_h \in V_h^t:  u_h(P_j)=0, \forall P_j \text{  on } \partial\Omega_h \}.$$  
 Then the approximation $\rho_h(x,y,t) = \sum_{i=1}^{N_{in}} \rho_k(t)\phi_i(x,y,t)$, where $\phi_k(x,y,t), i=1,...,N_{in}$ are the global piecewisely linear finite element basis functions.
Similar to the one-dimensional case, the time derivative and space derivative of $\rho_h(x,y,t)$ are given by
\begin{align*}
\partial_t \rho_h &= \sum_{i=1}^{N_{in}} \dot{\rho}_i(t)\phi_i(x,y,t) + \sum_{i=1}^{N} (\dot{x}_i(t)\psi_{x,i}(x,y,t) + \dot{y}_i(t)\psi_{y,i}(x,y,t)),\\
\partial_x \rho_h &= \sum_{i=1}^{N_{in}} \rho_i(t)\partial_x \phi_i(x,y,t), 
		 	\end{align*}
		 			\begin{align*}
\partial_y \rho_h &= \sum_{i=1}^{N_{in}} \rho_i(t)\partial_y\phi_i(x,y,t),	
\end{align*}
where 
\begin{equation*}
\psi_{x,i} = \frac{\partial \rho_h}{\partial x_i}, \quad \psi_{y,i} = \frac{\partial \rho_h}{\partial y_i}.
\end{equation*}
Denote by $\b{\rho} = (\rho_1(t),...,\rho_{N_{in}}(t))^{T}, \b{x} = (x_1(t),...,x_N(t))^T$ and $\b{y}  =
 (y_1(t),...,y_N(t))^T$,
then the discrete energy functional $\mathcal{E}$ and its time derivative  are respectively given by
\begin{equation}
	\mathcal{E}_{h}(\b{\rho},\b{x},\b{y}) = \int_{\Omega_h} f(\rho_h)dxdy
\end{equation}
and 
\begin{equation}
\dot{\mathcal{E}}_h(\b{\rho},\b{x},\b{y};\dot{\b{\rho}},\dot{\b{x}},\dot{\b{y}}) =\sum_{i=1}^{N_{in}} \frac{\partial \mathcal{E}_h}{\partial \rho_i}\dot{\rho}_i +\sum_{i=1}^N \Big(\frac{\partial \mathcal{E}_h} {\partial x_i}\dot{x}_i + \frac{\partial \mathcal{E}_h} {\partial y_i}\dot{y}_i \Big),
\end{equation}	
where
\begin{align*}
&\frac{\partial \mathcal{E}_h}{\partial \rho_i} = \int_{\Omega_h} f'(\rho_h) \phi_i dxdy,\quad i=1,...,N_{in};\\
&	\frac{\partial \mathcal{E}_h} {\partial x_i} = \int_{\Omega_h} f'(\rho_h)\psi_{x,i} dxdy,\quad i=1,...,N;\\
&  \frac{\partial \mathcal{E}_h} {\partial y_i} = \int_{\Omega_h} f'(\rho_h)\psi_{y,i} dxdy,\quad i=1,...,N.
\end{align*}

Let $\b{v}_h(x,y,t) = (v_{x}^{h}(x,y,t),v_{y}^{h}(x,y,t))$ be an approximation  of velocity $\b{v}$, s.t.  $v_{x}^{h}(x,y,t) =\sum_{i=1}^N v_{x,i}(t) \phi_i(x,y,t)$ and $ v_{y}^{h}(x,y,t) =\sum_{i=1}^N v_{y,i}(t) \phi_i(x,y,t))$.  Let $\lambda_h(x,y,t)=\sum_{i=1}^{N_{in}} \lambda_i(t)\phi_i(x,y,t)$ be an approximation of $\lambda(x,y,t)$. Then we can obtain a discrete version of the dissipation function,
\begin{equation*}
	\Phi_h = \frac{1}{2} \int_{\Omega_h}\rho_h |\b{v}_h|^2 dx dy  =  \frac{1}{2} \int_{\Omega_h} \rho_h (v_{x,h}^2 + v_{y,h}^2) dx dy.
\end{equation*}
For the continuum equation, we have
\begin{align*}
\begin{split}
	&\sum_{K\in \mathcal{T}_h}\int_{K} [\partial_t \rho_h + \nabla \cdot (\rho_h \b{v}_h)] w_h dxdy = \sum_{K\in \mathcal{T}_h}\int_{K} (\partial_t \rho_h w_h -\rho_h \b{v}_h \cdot \nabla w_h )dxdy \\
	&=  \int_{\Omega_h}(\partial_t \rho_h w_h - \rho_h (v_{x,h} \partial_x w_h + v_{y,h} \partial_y w_h) )dxdy = 0, \quad \forall w_h \in V_{h,0}^t.
\end{split}
\end{align*}

Then we have the discrete Rayleighian functional with a Lagrange multiplier $\lambda_h$, 
\begin{equation}
\begin{split}
	\tilde{\mathcal{R}}_h &= \Phi_h + \dot{\mathcal{E}}_h - \int_{\Omega_h}\partial_t \rho_h \lambda_h - \rho_h (v_{x,h} \partial_x \lambda_h + v_{y,h} \partial_y \lambda_h) dxdy.
\end{split}
\label{DRay2}
\end{equation}
We directly compute the corresponding Euler-Lagrange equations,
\begin{align}
\frac{\partial \tilde{\mathcal{R}}_h}{\partial \dot{\rho}_i} &=  \frac{\partial \mathcal
	{E}_h}{\partial \rho_i}  -  \int_{\Omega_h} \phi_i \lambda_h dxdy=0,\qquad \qquad\ i=1,...,N_{in}; \\
\frac{\partial \tilde{\mathcal{R}}_h}{\partial v_{x,i}} &=
\int_{\Omega_h} \rho_h v_{x,h} \phi_i dxdy + \frac{\partial \mathcal{E}_h}{\partial x_i} - \int_{\Omega_h} \psi_{x,i}\lambda_h dxdy + \int_{\Omega_h} \rho_h \phi_i \partial_x \lambda_h dxdy =0, \nonumber\\
&\qquad\qquad\qquad\qquad\qquad\qquad\qquad\qquad \quad i=1,...,N; \\
\frac{\partial \tilde{\mathcal{R}}_h}{\partial v_{y,i}} &=\int_{\Omega_h} \rho_h v_{y,h} \phi_i dxdy + \frac{\partial \mathcal{E}_h}{\partial y_i} - \int_{\Omega_h} \psi_{y,i}\lambda_h dxdy + \int_{\Omega_h} \rho_h \phi_i \partial_y \lambda_h dxdy =0, \nonumber\\
&\qquad\qquad\qquad\qquad\qquad\qquad\qquad\qquad \quad  i=1,...,N; \\
\frac{\partial \tilde{\mathcal{R}}_h}{\partial \lambda_i} &= \int_{\Omega_h} \partial_t \rho_h \phi_i  + \rho_h (v_{x,h} \partial_x \phi_i + v_{y,h}\partial_y \phi_i) dxdy =0 ,\nonumber\\ &\qquad \qquad\qquad\qquad\qquad\qquad\qquad\qquad \quad  i=1,...,N_{in}.
\end{align}
The equation can be written in an algebraic form as
\begin{align*}
\b{M}\b{\lambda} &= \frac{\partial \mathcal{E}_h}{\partial \b{\rho}},\\
\b{D} \b{v_x} &= -\frac{\partial \mathcal{E}_h}{\partial \b{x}} +(\b{B}_x - \b{E}_x)^T \b{\lambda} ,\\
\b{D} \b{v_y} &=-\frac{\partial \mathcal{E}_h}{\partial \b{y}} +(\b{B}_y - \b{E}_y)^T \b{\lambda} ,\\
\b{A} \dot{\b{\rho}} &= -(\b{B}_x-\b{E}_x) \b{v_x} -(\b{B}_y-\b{E}_y) \b{v_y},
\end{align*}
where 
\begin{align*}
&M_{ij} = \int_{\Omega_h} \phi_i \phi_j dxdy ; \quad D_{ij} = \int_{\Omega_h} \rho_h \phi_i \phi_j dxdy; \\
&B_{x,ij} = \int_{\Omega_h} \phi_i \psi_{x,j} dxdy ; \quad B_{y,ij} = \int_{\Omega_h} \phi_i \psi_{y,j} dxdy ;\\
&E_{x,ij} = \int_{\Omega_h} \rho_h \partial_x \phi_i \phi_j^n dxdy ; \quad E_{y,ij} = \int_{\Omega_h} \rho_h \partial_y \phi_i \phi_j^n dxdy .
\end{align*}
Similarly to the one dimensional case, we can further discretize the time derivative in the above system by an explicit Euler scheme or an implicit linearized scheme. We neglect the details for simplicity in presentation.




\bibliographystyle{elsarticle-num}
\bibliography{ref}

\begin{thebibliography}{10}
\expandafter\ifx\csname url\endcsname\relax
  \def\url#1{\texttt{#1}}\fi
\expandafter\ifx\csname urlprefix\endcsname\relax\def\urlprefix{URL }\fi
\expandafter\ifx\csname href\endcsname\relax
  \def\href#1#2{#2} \def\path#1{#1}\fi

\bibitem{knerr1977porous}
B.~F. Knerr, The porous medium equation in one dimension, Transactions of the
  American Mathematical Society 234~(2) (1977) 381--415.

\bibitem{oleinik1958cauchy}
O.~A. Oleinik, A.~S. Kalashnikov, C.~Juj-lin, The cauchy problem and boundary
  problems for equations of the type of non-stationary filtration, Izvestiya
  Rossiiskoi Akademii Nauk. Seriya Matematicheskaya 22~(5) (1958) 667--704.

\bibitem{kalavsnikov1967formation}
A.~Kala{\v{s}}nikov, Formation of singularities in solutions of the equation of
  nonstationary filtration, Z. Vycisl. Mat. i Mat. Fiz 7 (1967) 440--444.

\bibitem{aronson1983initially}
D.~Aronson, L.~Caffarelli, S.~Kamin, How an initially stationary interface
  begins to move in porous medium flow, SIAM Journal on Mathematical Analysis
  14~(4) (1983) 639--658.

\bibitem{herrero1987one}
M.~A. Herrero, J.~L. V{\'a}zquez, The one-dimensional nonlinear heat equation
  with absorption: Regularity of solutions and interfaces, SIAM Journal on
  Mathematical Analysis 18~(1) (1987) 149--167.

\bibitem{aronson2006porous}
D.~G. Aronson, The porous medium equation, Nonlinear Diffusion Problems:
  Lectures given at the 2nd 1985 Session of the Centro Internazionale
  Matermatico Estivo (CIME) held at Montecatini Terme, Italy June 10--June 18,
  1985 (2006) 1--46.

\bibitem{vazquez2007porous}
J.~L. V{\'a}zquez, The porous medium equation: mathematical theory, Oxford
  University Press on Demand, 2007.

\bibitem{nochetto1988approximation}
R.~H. Nochetto, C.~Verdi, Approximation of degenerate parabolic problems using
  numerical integration, SIAM Journal on Numerical Analysis 25~(4) (1988)
  784--814.

\bibitem{graveleau1971finite}
J.~Graveleau, P.~Jamet, A finite difference approach to some degenerate
  nonlinear parabolic equations, SIAM Journal on Applied Mathematics 20~(2)
  (1971) 199--223.

\bibitem{tomoeda1983numerical}
K.~Tomoeda, M.~Mimura, Numerical approximations to interface curves for a
  porous media equation, Hiroshima Mathematical Journal 13~(2) (1983) 273--294.

\bibitem{dibenedetto1984interface}
E.~DiBenedetto, D.~Hoff, An interface tracking algorithm for the porous medium
  equation, Transactions of the American Mathematical Society 284~(2) (1984)
  463--500.

\bibitem{monsaingeon2016explicit}
L.~Monsaingeon, An explicit finite-difference scheme for one-dimensional
  generalized porous medium equations: interface tracking and the hole filling
  problem, ESAIM: Mathematical Modelling and Numerical Analysis 50~(4) (2016)
  1011--1033.

\bibitem{zhang2009numerical}
Q.~Zhang, Z.-L. Wu, Numerical simulation for porous medium equation by local
  discontinuous galerkin finite element method, Journal of Scientific Computing
  38 (2009) 127--148.

\bibitem{gu2020bound}
Y.~Gu, J.~Shen, Bound preserving and energy dissipative schemes for porous
  medium equation, Journal of Computational Physics 410 (2020) 109378.

\bibitem{vijaywargiya2023two}
A.~Vijaywargiya, G.~Fu, Two finite element approaches for the porous medium
  equation that are positivity preserving and energy stable, arXiv preprint
  arXiv:2303.14216 (2023).

\bibitem{huang1994moving}
W.~Huang, Y.~Ren, R.~D. Russell, Moving mesh partial differential equations
  ({MMPDES}) based on the equidistribution principle, SIAM Journal on Numerical
  Analysis 31~(3) (1994) 709--730.

\bibitem{li2001moving}
R.~Li, T.~Tang, P.~Zhang, Moving mesh methods in multiple dimensions based on
  harmonic maps, Journal of Computational Physics 170~(2) (2001) 562--588.

\bibitem{tang2005moving}
T.~Tang, Moving mesh methods for computational fluid dynamics, Contemporary
  Mathematics 383~(8) (2005) 141--173.

\bibitem{budd2009adaptivity}
C.~J. Budd, W.~Huang, R.~D. Russell, Adaptivity with moving grids, Acta
  Numerica 18 (2009) 111--241.

\bibitem{hu2011simulating}
G.~Hu, P.~A. Zegeling, Simulating finger phenomena in porous media with a
  moving finite element method, Journal of Computational Physics 230~(8) (2011)
  3249--3263.

\bibitem{baines2005moving}
M.~J. Baines, M.~Hubbard, P.~Jimack, A moving mesh finite element algorithm for
  the adaptive solution of time-dependent partial differential equations with
  moving boundaries, Applied Numerical Mathematics 54~(3-4) (2005) 450--469.

\bibitem{baines2005moving2}
M.~J. Baines, M.~E. Hubbard, P.~K. Jimack, A moving mesh finite element
  algorithm for fluid flow problems with moving boundaries, International
  Journal for Numerical Methods in Fluids 47~(10-11) (2005) 1077--1083.

\bibitem{baines2011velocity}
M.~Baines, M.~Hubbard, P.~Jimack, Velocity-based moving mesh methods for
  nonlinear partial differential equations, Communications in Computational
  Physics 10~(3) (2011) 509--576.

\bibitem{duque2015application}
J.~C. Duque, R.~M. Almeida, S.~N. Antontsev, Application of the moving mesh
  method to the porous medium equation with variable exponent, Mathematics and
  Computers in Simulation 118 (2015) 177--185.

\bibitem{duque2014numerical}
J.~C. Duque, R.~M. Almeida, S.~N. Antontsev, Numerical study of the porous
  medium equation with absorption, variable exponents of nonlinearity and free
  boundary, Applied Mathematics and Computation 235 (2014) 137--147.

\bibitem{ngo2017study}
C.~Ngo, W.~Huang, A study on moving mesh finite element solution of the porous
  medium equation, Journal of Computational Physics 331 (2017) 357--380.

\bibitem{ngo2019adaptive}
C.~Ngo, W.~Huang, Adaptive finite element solution of the porous medium
  equation in pressure formulation, Numerical Methods for Partial Differential
  Equations 35~(3) (2019) 1224--1242.

\bibitem{duan2019numerical}
C.~Duan, C.~Liu, C.~Wang, X.~Yue, Numerical methods for porous medium equation
  by an energetic variational approach, Journal of Computational Physics 385
  (2019) 13--32.

\bibitem{liu2020lagrangian}
C.~Liu, Y.~Wang, On lagrangian schemes for porous medium type generalized
  diffusion equations: A discrete energetic variational approach, Journal of
  Computational Physics 417 (2020) 109566.

\bibitem{duan2022second}
C.~Duan, W.~Chen, C.~Liu, C.~Wang, X.~Yue, A second-order accurate, energy
  stable numerical scheme for the one-dimensional porous medium equation by an
  energetic variational approach, Communications in Mathematical Sciences
  20~(4) (2022) 978--1024.

\bibitem{carrillo2018lagrangian}
J.~A. Carrillo, B.~D{\"u}ring, D.~Matthes, D.~S. McCormick, A lagrangian scheme
  for the solution of nonlinear diffusion equations using moving simplex
  meshes, Journal of Scientific Computing 75 (2018) 1463--1499.

\bibitem{carrillo2015numerical}
J.~A. Carrillo, Y.~Huang, F.~S. Patacchini, G.~Wolansky, Numerical study of a
  particle method for gradient flows, Kinetic and Related Models 10~(3) (2017)
  613--641.

\bibitem{onsager1931reciprocal}
L.~Onsager, Reciprocal relations in irreversible processes. {I}., Physical
  Review 37~(4) (1931) 405.

\bibitem{onsager1931reciprocal2}
L.~Onsager, Reciprocal relations in irreversible processes. {II}., Physical
  Review 38~(12) (1931) 2265.

\bibitem{DoiSoft}
M.~Doi, Soft Matter Physics, Oxford University Press, Oxford, 2013.

\bibitem{Doi2015}
M.~Doi, Onsager principle as a tool for approximation, Chin. Phys. B 24 (2015)
  020505.
\newblock \href {https://doi.org/10.1088/1674-1056/24/2/020505}
  {\path{doi:10.1088/1674-1056/24/2/020505}}.

\bibitem{XuXianmin2016}
X.~Xu, Y.~Di, M.~Doi, Variational method for contact line problems in sliding
  liquids, Phys. Fluids 28 (2016) 087101.
\newblock \href {https://doi.org/10.1063/1.4959227}
  {\path{doi:10.1063/1.4959227}}.

\bibitem{ManXingkun2016}
X.~Man, M.~Doi, Ring to mountain transition in deposition pattern of drying
  droplets, Phys. Rev. Lett. 116 (2016) 066101.
\newblock \href {https://doi.org/10.1103/PhysRevLett.116.066101}
  {\path{doi:10.1103/PhysRevLett.116.066101}}.

\bibitem{doi2019application}
M.~Doi, J.~Zhou, Y.~Di, X.~Xu, Application of the onsager-machlup integral in
  solving dynamic equations in nonequilibrium systems, Physical Review E 99~(6)
  (2019) 063303.

\bibitem{zhang2022effective}
Z.~Zhang, X.~Xu, Effective boundary conditions for dynamic contact angle
  hysteresis on chemically inhomogeneous surfaces, Journal of Fluid Mechanics
  935 (2022) A34.

\bibitem{lu2021efficient}
S.~Lu, X.~Xu, An efficient diffusion generated motion method for wetting
  dynamics, Journal of Computational Physics 441 (2021) 110476.

\bibitem{xu2023variational}
X.~Xu, A variational analysis for the moving finite element method for gradient
  flows, Journal of Computational Mathematics 41~(2) (2023) 191--210.

\bibitem{otto2001geometry}
F.~Otto, The geometry of dissipative evolution equations: The porous medium
  equation, Communications in Partial Differential Equations 26~(1-2) (2001)
  101--174.
\newblock \href {https://doi.org/10.1081/PDE-100002243}
  {\path{doi:10.1081/PDE-100002243}}.

\bibitem{zel1950towards}
Y.~B. Zel’dovich, A.~Kompaneets, Towards a theory of heat conduction with
  thermal conductivity depending on the temperature, Collection of Papers
  Dedicated to 70th Birthday of Academician AF Ioffe, Izd. Akad. Nauk SSSR,
  Moscow (1950) 61--71.

\bibitem{barenblatt1952some}
G.~I. Barenblatt, On some unsteady motions of a fluid and a gas in a porous
  medium, Prikl. Mat. Makh. 16 (1952) 67--78.

\bibitem{baines1994algorithms}
M.~Baines, Algorithms for optimal discontinuous piecewise linear and constant
  {$L_2$} fits to continuous functions with adjustable nodes in one and two
  dimensions, Mathematics of Computation 62~(206) (1994) 645--669.

\end{thebibliography}




\end{document}